\documentclass[11pt]{article}
\usepackage{amsmath, amsthm, amssymb}
\usepackage{float,epsfig}
\usepackage{color}
\usepackage{cancel}
\usepackage{graphicx}
\usepackage{rotating}
\usepackage{lscape}
\usepackage{caption}
\usepackage{subcaption}
\usepackage{tikz}
\usepackage{booktabs}
\usepackage{todonotes}
\usepackage{algpseudocode}
\usepackage{algorithm}
\usepackage{hyperref}
\usepackage{cleveref}
\usepackage{comment}
\usetikzlibrary{arrows}
\usetikzlibrary{patterns}

\textheight 8.9in
 \textwidth 6.5in
\topmargin -.75in \evensidemargin .1in \oddsidemargin .1in


\DeclareMathOperator{\im}{Im}
\DeclareMathOperator{\Jac}{Jac}

\begin{document}

\newtheorem{theorem}{\bf Theorem}[section]
\newtheorem{proposition}[theorem]{\bf Proposition}
\newtheorem{corollary}[theorem]{\bf Corollary}
\newtheorem{lemma}[theorem]{\bf Lemma}

\theoremstyle{definition}
\newtheorem{definition}[theorem]{\bf Definition}
\newtheorem{example}[theorem]{\bf Example}
\newtheorem{exam}[theorem]{\bf Example}

\theoremstyle{remark}
\newtheorem{remark}[theorem]{\bf Remark}
\newtheorem{observation}[theorem]{\bf Observation}
\newcommand{\nrm}[1]{|\!|\!| {#1} |\!|\!|}

\newcommand{\ba}{\begin{array}}
\newcommand{\ea}{\end{array}}
\newcommand{\von}{\vskip 1ex}
\newcommand{\vone}{\vskip 2ex}
\newcommand{\vtwo}{\vskip 4ex}
\newcommand{\dm}[1]{ {\displaystyle{#1} } }

\newcommand\independent{\protect\mathpalette{\protect\independenT}{\perp}}
\def\independenT#1#2{\mathrel{\rlap{$#1#2$}\mkern2mu{#1#2}}}

\newcommand{\be}{\begin{equation}}
\newcommand{\ee}{\end{equation}}
\newcommand{\beano}{\begin{eqnarray*}}
\newcommand{\eeano}{\end{eqnarray*}}
\newcommand{\inp}[2]{\langle {#1} ,\,{#2} \rangle}
\def\bmatrix#1{\left[ \begin{matrix} #1 \end{matrix} \right]}
\def\dmatrix#1{\left| \begin{matrix} #1 \end{matrix} \right|}
\def \noin{\noindent}
\newcommand{\evenindex}{\Pi_e}


\def \R{{\mathbb R}}
\def \C{{\mathbb C}}
\def \K{{\mathbb K}}
\def \J{{\mathbb J}}
\def \Q{{\mathbb Q}}
\def \calL{\mathcal{L}}

\def \calB{{B}}
\def \calK{\mathcal{K}}
\def \calC{\mathcal{C}}
\def \calP{\mathcal{P}}
\def \calD{{D}}
\def \calV{{V}}
\def \calG{{G}}
\def \calN{\mathcal{N}}
\def \calT{\mathcal{T}}
\def \calH{\mathcal{H}}
\def \calM{\mathcal{M}}
\def \calI{\mathcal{I}}
\def \calE{{E}}
\def \calU{{U}}
\def \norm{\nrm{\cdot}\equiv \nrm{\cdot}}

\def \tr{\mathrm{Tr}}
\def \lam{\lambda}
\def \sig{\sigma}
\def \Sig{\Sigma}
\def \Lam{\Lambda}
\def \ep{\epsilon}
\def \sgn{\mathrm{sgn}}
\def \det{\mathrm{det}}

\algrenewcommand\algorithmicrequire{\textbf{Input:}}
\algrenewcommand\algorithmicensure{\textbf{Output:}}


\title{One-connection rule for structural equation models}

\author{ Bibhas Adhikari\thanks{Department of Mathematics, Indian Institute of Technology Kharagpur, bibhas@maths.iitkgp.ac.in}
  \and Elizabeth Gross\thanks{Department of Mathematics, University of Hawai`i at M\={a}noa, egross@hawaii.edu}
  \and Marc H\"ark\"onen\thanks{Max Planck Institute for Mathematics in the Sciences, harkonen@mis.mpg.de}
  \and Elias Tsigaridas\thanks{Inria Paris, elias.tsigaridas@inria.fr}
  }

\date{}

\maketitle

{\small \noin{\bf Abstract.} Linear structural equation models are multivariate statistical models encoded by mixed graphs.  In particular, the set of covariance matrices for distributions belonging to a linear structural equation model for a fixed mixed graph $G=(V, D,B)$ is parameterized by a rational function with parameters for each vertex and edge in $G$. This rational parametrization naturally allows for the study of these models from an algebraic and combinatorial point of view. Indeed, this point of view has led to a collection of results in the literature, mainly focusing on questions related to identifiability and determining relationships between covariances (i.e., finding polynomials in the Gaussian vanishing ideal).  So far, a large proportion of these results has focused on the case when $D$, the directed part of the mixed graph $G$, is acyclic. This is due to the fact that in the acyclic case, the parametrization becomes polynomial and there is a description of the entries of the covariance matrices in terms of a finite sum.  We move beyond the acyclic case and give a closed form expression for the entries of the covariance matrices in terms of the one-connections in a graph obtained from $D$ through some small operations. This closed form expression then allows us to show that if $G$ is simple, then the parametrization map is generically finite-to-one.  Finally, having a closed form expression for the covariance matrices allows for the development of an algorithm for systematically exploring possible polynomials in the Gaussian vanishing ideal.
}

\section{Introduction}

A structural equation model (SEM) is a  multivariate statistical model having a parametrization 
induced by a mixed graph $G$; that is a graph having both directed and bidirected edges.
Because of its
flexibility and its ability to model the effect of latent random
variables, it has a wide applicability to a variety of fields including ecology, psychology, and epidemiology. For an ecological example, in \cite{wolfe2017}, the authors use structural equation models to understand how native Hawaiian birds, such as the `i`iwi and `apapane arrange life cycle events around climatically-influenced food resources. In particular, 
they use \emph{linear structural equation models} in their analysis, that in turn employs linear equations for the description of the model and the entries of the corresponding covariance matrices are rational functions
in the parameters associated to $G$.
These linear models consist the focus of our study.

    A linear structural equation model is determined by a mixed graph
$G=(V, D, B)$, where $V$ is a vertex set of size $|V|=n$,
 $D$ is a set of directed
edges, and $B$ is a set of bidirected edges. Let $\R^D$ be the set of matrices $\Lam = (\lambda_{ij})\in\R^{n \times n}$
where $\lam_{ij}\neq 0$ if and only if $(i, j)\in\calD$ and let $\R_{reg}^\calD$ denote the set of matrices $\Lambda \in \R^D$ such that
 $I- \Lambda$   is invertible.  Furthermore, let $PD_\calV$
denote the cone of symmetric positive definite matrices of
order $n \times n$, and let 
$$PD(\calB)=\{\Omega\in PD_\calV : \omega_{ij}=0 \, \mbox{ if } \,i\neq j \, \mbox{ and } \, i\leftrightarrow j\notin \calB \}.$$
The vertices of the graph $G$ represent random variables $X_i$, where
$1 \leq i \leq n$, which we view as a vector ${\bf X} = (X_1, \ldots, X_n)^T$.  The linear structural equation model associated to $G$ is the family of all multivariate Gaussian distributions $\mathcal N(0, \Sigma)$ with 
a covariance matrix $\Sigma$ belonging to the image of the following parametrization map:
\begin{equation}
  \label{eq:phi-G}
  \begin{array}{lclllll}
\phi_\calG  :  & \R_{reg}^\calD \times PD(\calB) & \rightarrow & PD_\calV \\
     & (\Lam,\Omega) & \mapsto & (I-\Lam)^{-T}\Omega
(I-\Lam)^{-1} ,
  \end{array}
\end{equation}

\noindent   where $I$ denotes the identity matrix
of order $n\times n$.

Linear structural equation models have  been studied using a combination of algebraic and combinatorial techniques (see \cite{drton2018algebraic} for a thorough review).  These techniques have been particularly useful when addressing issues of parameter identification and establishing covariance matrix relationships. For parameter identification,  we are interested in cases where the map $\phi_G$ is globally injective (\emph{global identifiability}) or locally injective (\emph{local identifiability}).  For example, by pairing algebra and combinatorics, the authors of \cite{brito2002new} show that if $G= (V, D, B)$ is simple and $D$ is acyclic, then $\phi_G$ is generically injective. In \cite{tian2002general}, combinatorics and algebra are further applied, showing how the problem of identifiability becomes easier by studying  subgraphs of the original mixed graph.  While in \cite{foygel2012}, the authors develop the combinatorial \emph{half-trek criterion} for establishing generic global identifiability.

Whereas identifiability is useful for meaningful parameter inference, covariance relationships can be used to test model compatibility \cite{bollen2000tetrad, chen2014testable, drton2008moments}. Treating the entries of the covariance matrix $\Sigma =   (I - \Lambda)^{-T} \Omega (I- \Lambda)$ as indeterminates, the set $\mathcal I(G)$ of all polynomials in the entries $\Sigma$ with real coefficients that evaluate to zero for every covariance matrix in the image of $\phi_G$ is the \emph{Gaussian vanishing ideal of G}. The ideal $\mathcal I(G)$ contains  all polynomial covariance relationships. In addition to testing model compatibility, properties of the Gaussian vanishing ideal can be used to answer questions about the dimension of the model, singularities, and establishing model equivalence.   In the algebraic setting,  since the model is described as the image of a rational map, it is often helpful to consider the Zariski closure of the statistical model, which is an irreducible variety in the space of symmetric, $n \times n$, matrices (see, for example, \cite{sethBook}, for more more details). The Gaussian vanishing ideal of $G$ is the radical ideal corresponding to this irreducible variety. Combinatorial techniques have been used with much success in the study of Gaussian vanishing ideals. For  example, the purely graphical \emph{trek separation} \cite{sullivant2010trek} and \emph{restricted trek separation} \cite{drton2018nested} criteria give rise to certain polynomials in the vanishing ideal. This approach can yield at least a subideal of the Gaussian vanishing ideal even in cases where it is no feasible to compute the full vanishing ideal.

Both the identifiability problem and the covariance relationship problem can benefit from a description of the entries of $\Sigma$ in terms of $\lambda$ and $\omega$ parameters. When $D$ is acyclic, the map described in \eqref{eq:phi-G} is a polynomial map and the entries of $\Sigma =  \phi (I - \Lambda)^{-T} \Omega (I- \Lambda)$ can be described combinatorially in terms of a finite sum of trek monomials.  A \emph{trek} between vertices $i$ and $j$ in a mixed graph $G=(V, D, B)$ is a triple $(P_L, P_M , P_R)$ of paths where $P_L$ is a directed path of directed edges from $D$ with sink $i$, $P_R$ is a directed path of directed edges in $D$ with sink $j$, and $P_M$ is either empty if the source of $P_L$ is also the source of $P_R$, or a single bidirected edge from B connecting the source of $P_L$ to the source of $P_R$; a \emph{trek monomial} is a monomial in the $\lambda$s and $\omega$s associated with the paths $(P_L, P_R)$ and peak of the trek $(P_M)$.  When $D$ contains cycles the sum describing the entries of $\Sigma$ is infinite and, to our knowledge, up until now, a closed form expression for the entries of $\Sigma$ isn't known.

In this paper, for the main theorem (Theorem \ref{formula:oneconnectionrule}), we take an algebraic and combinatorial matrix theory  approach \cite{brualdi2008combinatorial} to  give a closed form rational expression for each of the entries of $\Sigma$.  We refer to this description of the entries of $\Sigma$ as the \emph{ one-connection rule} as a complement to the \emph{trek rule} (see e.g. \cite{sullivant2010trek}). In particular, we study the matrix in terms of
the $1$-\textit{connections} associated with the \textit{Coates
  digraph} representation of $I-\Lam$.  This results in an alternative
characterization associated with the graphs,
which is more compact
and results in more efficient implementations (Sec.~\ref{subsec:experiments}).  We then use this formula to: (i) Show that if $G=(V,D,B)$ is simple, then $\phi_G$ is generically finite-to-one (Theorem \ref{thm:ident}), and
 (ii) Give an algorithm to guide the discovery of polynomials in the Gaussian vanishing ideal by finding possible monomial supports (Algorithm \ref{alg:pruneSupport}).

The rest of the paper is organized as follows.  Section \ref{sec:prelim} reviews spanning subgraphs, linear graphs, the Coates formula for the determinant of a matrix,  and other  preliminaries from combinatorial matrix theory that we need for
our study. Section \ref{sec:Graphical_models} applies the tools reviewed in the preceding section to graphical models, giving a closed form expression for the covariance matrix $\Sigma$ and showing that for simple mixed graphs, including graphs with cycles, the parametrization $\phi_G$ is generically finite-to-one.  In Section ~\ref{sec:linear-1-connection} we present algorithms to symbolically compute the covariance matrix $\Sigma = \phi_{G}(\Lambda, \Omega)$ given a fixed graph using linear subgraphs and 1-connections, and we
evaluate the implementation of our algorithms on collections of graphs. Finally, in Section~\ref{sec:test-in-ideal} we present an approach
that gives candidates for the monomial support of homogeneous polynomials in the Gaussian vanishing ideal using our understanding of the covariance matrix $\Sigma$ in terms of 1-connections and linear subgraphs.

Throughout this manuscript we use $[n] = \{1,2, \dotsc, n \}$. Given a directed graph $D=(\calV, \calE)$ with possible self-loops, a \textit{spanning subgraph} of $D$ is a subgraph of $D$ whose vertex set is $V$.  A \textit{linear subgraph} of $D$ is a spanning subgraph of $D$ in which
each vertex has indegree $1$ and outdegree $1$. Thus, a linear subgraph is a spanning collection of pairwise vertex-disjoint cycles. Note a self-loop on a vertex contributes one to the indegree and outdegree of that vertex.  A graph and its linear subgraphs are pictured in Figure \ref{linearsg}. 

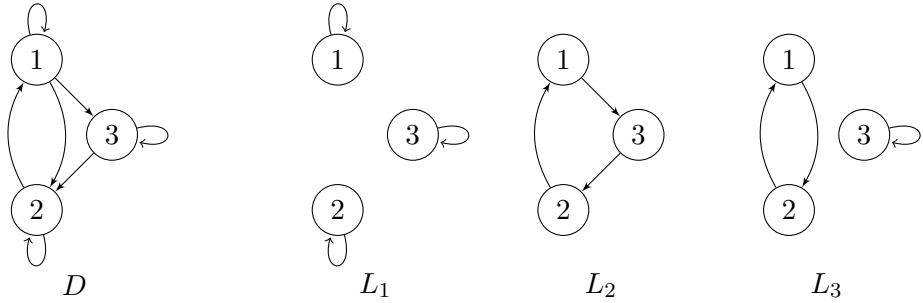
\begin{figure}
\centering
\begin{tikzpicture}
\tikzset{vertex/.style = {shape=circle,draw,minimum size=0.5em}}
\tikzset{edge/.style = {->,> = latex'}}
\node[vertex] (a) at (0,1) {1};
\node[vertex] (b) at (0,-1) {2};
\node[vertex] (c) at (1,0) {3};

\draw[edge] (a) to (c);
\draw[edge] (c) to (b);
\draw[edge] (b) to[bend left] (a);
\draw[edge] (a) to[bend left] (b);
\path (a) edge [loop above] node {} (a);
\path (b) edge [loop below] node {} (b);
\path (c) edge [loop right] node {} (c);
\node at (0.5,-2.0) {$D$};

\tikzset{vertex/.style = {shape=circle,draw,minimum size=0.5em}}
\tikzset{edge/.style = {->,> = latex'}}
\node[vertex] (a) at (4,1) {1};
\node[vertex] (b) at (4,-1) {2};
\node[vertex] (c) at (5,0) {3};

\path (a) edge [loop above] node {} (a);
\path (b) edge [loop below] node {} (b);
\path (c) edge [loop right] node {} (c);
\node at (4.5,-2.0) {$L_1$};

\tikzset{vertex/.style = {shape=circle,draw,minimum size=0.5em}}
\tikzset{edge/.style = {->,> = latex'}}
\node[vertex] (a) at (7,1) {1};
\node[vertex] (b) at (7,-1) {2};
\node[vertex] (c) at (8,0) {3};

\draw[edge] (a) to (c);
\draw[edge] (c) to (b);
\draw[edge] (b) to[bend left] (a);
\node at (7.5,-2.0) {$L_2$};

\tikzset{vertex/.style = {shape=circle,draw,minimum size=0.5em}}
\tikzset{edge/.style = {->,> = latex'}}
\node[vertex] (a) at (10,1) {1};
\node[vertex] (b) at (10,-1) {2};
\node[vertex] (c) at (11,0) {3};
\draw[edge] (b) to[bend left] (a);
\draw[edge] (a) to[bend left] (b);
\path (c) edge [loop right] node {} (c);
\node at (10.5,-2.0) {$L_3$};
\end{tikzpicture}
\caption{The directed graph $D$ and its linear subgraphs $L_1, L_2, L_3$}
\label{linearsg}
\end{figure}

Related to linear subgraphs of a directed graph are \emph{1-connections} (see e.g. \cite{brualdi2008combinatorial}), which are defined as follows.

\begin{definition}[$1$-connections of a directed graph]
Let $D=(V, E)$ be a directed graph. Let  $i,j \in \calV.$ An $1$-connection from $i$ to $j$ is a spanning subgraph  of $D$ with the following properties:
\begin{itemize}
    \item if $i\neq j$, then $i$ has indegree 0 and outdegree 1, $j$ has indegree 1 and outdegree 0, and every other vertex has indegree 1 and outdegree 1.
    
    \item if $i = j$, then $i = j$ has indegree 0 and outdegree 0, and every other vertex has indegree 1 and outdegree 1.
\end{itemize}

\end{definition}

\noindent In other words, an $1$-connection $C$ of $D=(V,E)$ from $i$ to $j$ is a spanning subgraph of $D$ that consists of a directed path $p$ from $i$ to $j$ (the path is of length zero if $i=j$) and a possibly empty collection of pairwise disjoint cycles that have no vertex in common with the path $p$. 

Note that, in general, for a pair $i,j$ there may be several possible 1-connections from $i$ to $j$. Given a directed graph $D$ and a pair of vertices $i, j$, we use $\calC_{i\to j}$ to denote the collection of all $1$-connections from $i$ to $j$. Some $1$-connections of the directed graph $D$ in \Cref{linearsg} are shown in \Cref{1csg}.

\begin{figure}
\centering
\begin{tikzpicture}
\tikzset{vertex/.style = {shape=circle,draw,minimum size=0.5em}}
\tikzset{edge/.style = {->,> = latex'}}
\node[vertex] (a) at (0,1) {1};
\node[vertex] (b) at (0,-1) {2};
\node[vertex] (c) at (1,0) {3};

\path (b) edge [loop below] node {} (b);
\path (c) edge [loop right] node {} (c);
\node at (0.5,-2.0) {
};

\tikzset{vertex/.style = {shape=circle,draw,minimum size=0.5em}}
\tikzset{edge/.style = {->,> = latex'}}
\node[vertex] (a) at (3,1) {1};
\node[vertex] (b) at (3,-1) {2};
\node[vertex] (c) at (4,0) {3};

\draw[edge] (a) to[bend left] (b);
\path (c) edge [loop right] node {} (c);
\node at (3.5,-2.0) {
};

\tikzset{vertex/.style = {shape=circle,draw,minimum size=0.5em}}
\tikzset{edge/.style = {->,> = latex'}}
\node[vertex] (a) at (6,1) {1};
\node[vertex] (b) at (6,-1) {2};
\node[vertex] (c) at (7,0) {3};

\draw[edge] (a) to (c);
\draw[edge] (c) to (b);
\node at (6.5,-2.0) {
};

\tikzset{vertex/.style = {shape=circle,draw,minimum size=0.5em}}
\tikzset{edge/.style = {->,> = latex'}}
\node[vertex] (a) at (9,1) {1};
\node[vertex] (b) at (9,-1) {2};
\node[vertex] (c) at (10,0) {3};
\draw[edge] (b) to[bend left] (a);
\draw[edge] (a) to[bend left] (b);
\node at (9.5,-2.0) {
};
\end{tikzpicture}
\caption{Some $1$-connections of the directed graph $D$ from \Cref{linearsg}.  The far left 1-connection belongs to the set $\mathcal C_{1 \to 1}$, the middle two belong to $\mathcal C_{1\to2}$, the far right 1-connection belongs to to $\mathcal C_{3 \to 3}$. }
\label{1csg}
\end{figure}
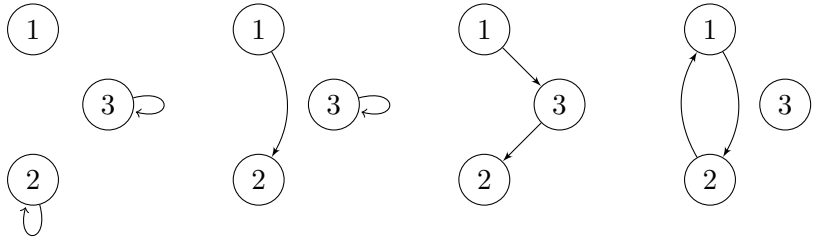

Given a directed graph $D$, we can obtain some 1-connections of  $D$ from linear subgraphs of $D$ in the following manner.   Let $L$ be a linear subgraph of $D$ that contains an edge from $j$ to $i$. Then if we delete this edge, we obtain a 1-connection from $i$ to $j$. If $i=j,$ the edge deleted is a self-loop.  Note that we can obtain some 1-connections $D$ this way, but not all.

In our study of structural equation models, we will consider directed graphs weighted by the entries of $\Lambda$.  We will denote weighted directed graphs as a triple $D=(V,E,W)$ where $W=[w_{ij}]$ is the matrix of edge weights.  Given a matrix $A$, we define its corresponding Coates digraph. 

\begin{definition}[Coates digraph]
Let $A=[a_{ij}]$ be a real square matrix of order $n.$ Then the Coates digraph $D_{A^T}$ corresponding to $A$ is defined as the weighted directed graph associated to $A^T$, that is, the graph $D_{A^T} = (V,E,A^{T})$ with vertex set $\calV =[n],$ edge set $\calE=\{(i, j) \,|\, a_{ji} \neq 0 \}$, and edge weight matrix $W=A^{T}=[a_{ji}]$.
\end{definition}

\begin{definition}
    Let $D=(V,E,W)$ be a weighted directed graph. The  weight product of $D$, denoted $w(D)$, is the product of  the weights on the edges in D, that is,
    \begin{align*}
        w(D) := \prod_{(i,j) \in E} w_{ij}.
    \end{align*}
    If $E = \emptyset$, then $w(D) := 1$.
    The  cycle number of $D$, denoted by $c(D)$, is the number of directed cycles (including self-loops) contained in $D$.
\end{definition}

Now we recall the Coates formula for the determinant of a square matrix $A$, which is written in terms of the weight products and cycle numbers of the linear subgraphs of the Coates digraph of $A$. 

\begin{definition}[Coates formula of determinant {\cite[Def 4.1.1]{brualdi2008combinatorial}}]\label{def:coatesDeterminant}
The determinant of a square matrix $A$ of order $n$ is given by 
\begin{align*}
  \det \ A = (-1)^n \sum_{L\in\calL} (-1)^{c(L)} w(L)=  \sum_{L\in\calL} (-1)^{n-c(L)} w(L),
\end{align*}
where $\calL$ is the set of linear subgraphs of the Coates digraph of $A$, i.e. $D_{A^T}$.
\end{definition}

Just as the determinant of a square matrix $A$ can be written in terms of the weight products and cycle numbers of the linear subgraphs of the Coates digraph of $A$, the entries of the inverse of $A$ can be written in terms of the weight products and cycle numbers of the linear subgraphs and $1$-connections of the Coates digraph of $A$.  In particular, $1$-connections play the role of cofactors for $A^{-1}$. 
\begin{theorem}[{\cite[Thm 5.3.2]{brualdi2008combinatorial}}]\label{formula:inv} 
  Let $A=[a_{ij}]$ be an invertible matrix. Then the $(j,i)$th entry of $A^{-1},$ say $a_{ji}'$, is given by
  \begin{align*}
      a_{ji}' = \frac{\sum_{C\in \calC_{i \to j}} (-1)^{c(C)+1} w(C)}{\sum_{L\in \mathcal{L}} (-1)^{c(L)} w(L)},
  \end{align*}
  where $\calC_{i \to j}$ and $\calL$ are respectively the set of 1-connections from $i$ to $j$ and the set of linear subgraphs of the Coates digraph  $D_{A^T}$ of $A$.
\end{theorem}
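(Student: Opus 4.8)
The plan is to reduce the statement to Cramer's rule and then interpret each cofactor combinatorially through a weight-preserving bijection between permutations and $1$-connections of $D_{A^T}$. Write $M_{ij}$ for the minor of $A$ obtained by deleting row $i$ and column $j$, so that Cramer's rule gives $a_{ji}' = (A^{-1})_{ji} = (-1)^{i+j} M_{ij}/\det A$. The first step is to expand the cofactor over the symmetric group: grouping the Leibniz expansion of $\det A$ according to the image of $i$ yields
\[
  (-1)^{i+j} M_{ij} = \sum_{\substack{\sigma \in S_n \\ \sigma(i) = j}} \sgn(\sigma) \prod_{k \neq i} a_{k,\sigma(k)}.
\]
This reduces the theorem to matching the right-hand side, term by term, with the numerator $\sum_{C \in \calC_{i \to j}} (-1)^{c(C)+1} w(C)$, once the denominator $\sum_{L \in \calL} (-1)^{c(L)} w(L)$ has been identified with $(-1)^n \det A$ via the Coates formula of \Cref{def:coatesDeterminant} together with the identity $(-1)^{n-c(L)} = (-1)^n (-1)^{c(L)}$.

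Next I would build the bijection. To a permutation $\sigma$ with $\sigma(i) = j$, associate the spanning subgraph $C_\sigma$ of $D_{A^T}$ with edge set $\{(\sigma(k), k) : k \neq i\}$. Since the edge $(\sigma(k), k)$ of $D_{A^T}$ carries weight $a_{k,\sigma(k)}$, we have $w(C_\sigma) = \prod_{k \neq i} a_{k,\sigma(k)}$, reproducing exactly the monomial above. A degree count confirms that $C_\sigma$ is a $1$-connection from $i$ to $j$: the retained edges have targets $[n] \setminus \{i\}$ and sources $\sigma([n] \setminus \{i\}) = [n] \setminus \{j\}$, so $i$ has indegree $0$ and outdegree $1$, $j$ has indegree $1$ and outdegree $0$, and every other vertex has indegree and outdegree $1$. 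Conversely, each $1$-connection determines a unique such $\sigma$ by reading off its edges and setting $\sigma(i) = j$, so the assignment is a bijection; the diagonal case $i = j$ is the degenerate one in which the $i \to j$ path has length zero and $i$ becomes an isolated vertex, and the same recipe applies.

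It remains to reconcile the signs, which I expect to be the delicate point. Adjoining the single edge $(j, i) = (\sigma(i), i)$ to $C_\sigma$ closes the directed $i \to j$ path into a cycle and produces the linear subgraph $L_\sigma$ with edge set $\{(\sigma(k), k) : k \in [n]\}$, whose directed cycles correspond to the cycles of $\sigma$; hence $c(\sigma) = c(C_\sigma) + 1$. Using $\sgn(\sigma) = (-1)^{n - c(\sigma)}$, each term rewrites as $\sgn(\sigma) w(C_\sigma) = (-1)^{n - c(C_\sigma) - 1} w(C_\sigma) = (-1)^n (-1)^{c(C_\sigma)+1} w(C_\sigma)$. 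Summing over the bijection gives $(-1)^{i+j} M_{ij} = (-1)^n \sum_{C \in \calC_{i \to j}} (-1)^{c(C)+1} w(C)$, and dividing by $\det A$ while substituting $\sum_{L \in \calL} (-1)^{c(L)} w(L) = (-1)^n \det A$ cancels the two factors of $(-1)^n$ and produces the claimed expression for $a_{ji}'$. The main obstacle is exactly this sign bookkeeping: one must simultaneously track the transpose built into the Coates digraph (the edge $(i,j)$ weighing $a_{ji}$), the cofactor sign $(-1)^{i+j}$, and the cycle relation $c(\sigma) = c(C) + 1$, and verify that the degenerate diagonal case obeys the same accounting.
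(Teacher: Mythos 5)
Your proof is correct. Note, however, that the paper itself offers no proof of this statement: it is imported verbatim as Theorem 5.3.2 of Brualdi--Cvetkovi\'c, so there is nothing internal to compare against, and what you have written is essentially the standard textbook derivation made self-contained. Your route --- Cramer's rule, the cofactor identity $(-1)^{i+j}M_{ij}=\sum_{\sigma(i)=j}\sgn(\sigma)\prod_{k\neq i}a_{k,\sigma(k)}$, and the weight-preserving bijection $\sigma\mapsto C_\sigma$ with edge set $\{(\sigma(k),k):k\neq i\}$ --- is sound, and the sign bookkeeping checks out: the degree count correctly matches the paper's degree-based Definition of a $1$-connection, the relation $c(\sigma)=c(C_\sigma)+1$ holds because deleting the edge $(j,i)$ opens the cycle of $\sigma$ through $i$ into the directed $i\to j$ path (with the self-loop case $i=j$ handled identically), and $\sgn(\sigma)=(-1)^{n-c(\sigma)}$ then yields the factor $(-1)^n(-1)^{c(C_\sigma)+1}$ that cancels against the $(-1)^n$ in the Coates formula for the denominator. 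Two cosmetic points you could tighten: the cofactor identity is cleanest if you justify it by comparing the grouped Leibniz sum with the Laplace expansion along row $i$ for a matrix of independent indeterminates; and the bijection is strictly between $1$-connections of $D_{A^T}$ and those permutations whose monomial $\prod_{k\neq i}a_{k,\sigma(k)}$ is nonzero --- permutations using a zero entry contribute nothing and correspond to no subgraph, so both sides agree after discarding them. Neither issue is a gap.
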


Both Definition \ref{def:coatesDeterminant} and Theorem \ref{formula:inv} will play key roles in the following section.

\section{The 1-connection rule and identifiability  for simple graphs}
\label{sec:Graphical_models}

While the previous section dealt exclusively with directed graphs, we now turn our attention back to mixed graphs. Let $G=(\calV, D, B)$ be a mixed graph, where $V$ is the set of vertices, $D$ is the set of directed edges, and $B$ is the set of bidirected edges. Recall that the structural equation model associated to $G$ is parametrized by two matrices $\Lambda \in \R_{\mathrm{reg}}^D$  and  $\Omega \in PD(B)$ and is the image of the map $\phi_G$ in equation \eqref{eq:phi-G} from $\R_{reg}^D \times PD(B)$ to  $PD_\calV$  where
$$\phi_{G} (\Lam, \Omega)=(I - \Lam)^{-T} \Omega (I - \Lam)^{-1}.$$

Now we will describe the entries of $\phi_{G} (\Lambda, \Omega)$ in terms of the combinatorics of $ G$. 
We construct a new weighted directed graph ${\widetilde{ D}}=(V, E(\widetilde{D}), I-\Lambda)$ by adding to $E(D)$ self-loops of weight $1$ to every vertex to obtain $E(\widetilde{D})$ and converting each edge weight $\lam_{ij}$ to $-\lam_{ij}$. Then ${\widetilde{D}}$ is the Coates digraph of $(I-\Lam)^{T}$, and, by \Cref{formula:inv}, we have the following proposition.
\begin{proposition}\label{prop:lamij}
  Let $G=(\calV, D, B)$ be a mixed graph on $n$ vertices, and $\Lam\in \R^\calD_{\mbox{reg}}.$ Then 
  \begin{align}\label{formula:mij}
    \left((I-\Lambda)^{-T}\right)_{ji}=\left((I-\Lambda)^{-1}\right)_{ij}=\frac{\sum_{C \in \calC_{i\to j}} (-1)^{c(C)+1} w(C)}{\sum_{L\in \mathcal{L}} (-1)^{c(L)} w(L)},
  \end{align}
  where $\calC_{i\to j}$ and $\calL$ are respectively the set of 1-connections from $i$ to $j$ and the set of linear subgraphs of ${\widetilde{D}}$.
\end{proposition}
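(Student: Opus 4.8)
The plan is to obtain Proposition~\ref{prop:lamij} as a direct instance of Theorem~\ref{formula:inv} applied to the matrix $A := (I-\Lambda)^T$. First I would record that $A$ is invertible: since $\Lambda \in \R^\calD_{\mathrm{reg}}$ the matrix $I-\Lambda$ is invertible, hence so is its transpose $A$, and Theorem~\ref{formula:inv} is therefore applicable to $A$. The entire argument then reduces to a single identification, namely that the Coates digraph $D_{A^T}$ of $A$ coincides with the weighted graph $\widetilde{D}$ constructed just before the statement; once that is in hand, the formula is pure substitution.

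To establish the identification I would compute the edge data of $D_{A^T}$ straight from the definition of the Coates digraph. By definition $D_{A^T}$ has vertex set $[n]$ and carries an edge $i \to j$ of weight $a_{ji}$ exactly when $a_{ji} \neq 0$. Here
\[
a_{ji} = \left((I-\Lambda)^T\right)_{ji} = (I-\Lambda)_{ij} = \delta_{ij} - \lambda_{ij}.
\]
Thus for $i \neq j$ the edge $i \to j$ has weight $-\lambda_{ij}$ and is present precisely when $\lambda_{ij}\neq 0$, i.e. when $(i,j)\in\calD$, while for $i=j$ the self-loop has weight $1-\lambda_{ii}=1$ (there are no self-loops in $D$, so $\lambda_{ii}=0$). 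This is exactly the edge set and weighting of $\widetilde{D}$. The key point to watch is that the two transpositions in play — the one defining $A=(I-\Lambda)^T$ and the one built into the Coates convention that places $a_{ji}$ on the edge $i\to j$ — cancel, so the edges of $D_{A^T}$ are oriented as in $D$ rather than reversed; this is what makes $D_{A^T}=\widetilde{D}$ and not its edge-reversal.

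With $D_{A^T}=\widetilde{D}$ in place, Theorem~\ref{formula:inv} immediately expresses the $(j,i)$ entry of $A^{-1}$ as the displayed ratio of signed weight sums over the $1$-connections $\calC_{i\to j}$ and the linear subgraphs $\calL$ of $\widetilde{D}$. To finish I would rewrite the left-hand side: $A^{-1}=\left((I-\Lambda)^T\right)^{-1}=(I-\Lambda)^{-T}$, so that $(A^{-1})_{ji}=\left((I-\Lambda)^{-T}\right)_{ji}=\left((I-\Lambda)^{-1}\right)_{ij}$, the last equality being the definition of the transpose. Substituting delivers both equalities in \eqref{formula:mij}.

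The only step demanding genuine care is the transpose-and-index bookkeeping in the two middle steps: confirming that the Coates convention applied to $(I-\Lambda)^T$ reproduces $\widetilde{D}$ with orientations agreeing with $D$, and checking that the $(j,i)$ slot produced by Theorem~\ref{formula:inv} is exactly the $(i,j)$ entry of $(I-\Lambda)^{-1}$ we want. I expect this matching to be the main obstacle, but it is a matter of careful indexing rather than a substantive combinatorial or analytic difficulty.
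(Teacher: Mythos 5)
Your proposal is correct and follows exactly the route the paper takes: the paper's proof is the one-line observation that $\widetilde{D}$ is the Coates digraph of $(I-\Lambda)^T$ combined with Theorem~\ref{formula:inv}, and you have simply made explicit the invertibility check and the transpose/index bookkeeping showing $D_{A^T}=\widetilde{D}$ for $A=(I-\Lambda)^T$. No gaps.
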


\begin{proof}
  The proof follows from the construction of the directed graph $\widetilde{D}$ from $D$ and using Theorem \ref{formula:inv}.
\end{proof}

Note Proposition \ref{formula:mij} gives us a closed form expression for the entries of $(I-\Lam)^{-1}$ as rational functions as opposed to infinite series that we obtain in the series expansion $(I-\Lam)^{-1} = I + \Lam + \Lam^2 + \dotsb$. 

 By making several observations about acyclic graphs, we can see that Proposition \ref{prop:lamij} gives us the equation stated in Proposition 3.1 in \cite{sullivant2010trek} when $D$ is acyclic, that is, the $ij$th entry of $(I-\Lambda)^{-1}$ is the sum of path monomials over all paths from $i$ to $j$ in $D$.  First, observe that if $D$ is acyclic, the only linear subgraph of $\widetilde{D}$ is the subgraph containing every self-loop with weight $1$ and no other edges. Hence, 
\begin{align}\label{detcycle}
  \sum_{L\in \mathcal{L}} (-1)^{c(L)} w(L)=(-1)^n.
\end{align} 
Next, observe that when $D$ is acyclic, a one-connection $C \in \mathcal C_{i \to j}$ of $\widetilde D$ consists of a single path $p = ((i=i_0, i_1), (i_1, i_2), \hdots, (i_{l-1}, i_l=j))$ from $\widetilde D$ (and, consequently, $D$) and a collection of self-loops, thus, in this case, 
\begin{align*}
  w(C)=\prod_{(k,l) \in p} (-\lambda_{kl}),
\end{align*} 
when $i \neq j$.  When $i = j$, the path $p$ is necessarily the empty path, and we define  $w(C)=1$.

Finally, for any 1-connection $C \in \calC_{i \to j}$ we have $c(C) = n - (l+1)$, where $l$ is the length of the path from $i$ to $j$ in $C$. Therefore, for an acyclic graph $\calD$,
\begin{equation}\label{eq:1ml}
  \left((I-\Lambda)^{-1}\right)_{ij}= \frac{1}{(-1)^n}\sum_{p\in \mathcal{P}(i,j)} (-1)^{n-(l+1)+1}\prod_{(k,l) \in p} (-\lambda_{kl}) = \sum_{p\in \mathcal{P}(i,j)} \prod_{(k, l) \in p} \lam_{kl},
\end{equation} 
where $\mathcal P(i,j)$ is the set of paths from $i$ to $j$ in $D$  including the empty path when $i=j$. Thus, when $D$ is acyclic, equation \eqref{eq:1ml} is the equation stated in Proposition 3.1 in \cite{sullivant2010trek}. 

Now we provide a combinatorial meaning for the entries of the covariance matrix $\Sig$ corresponding to a mixed graph. Inspired by the term \textit{trek rule} used in \cite{sullivant2010trek} we call it the $1$\textit{-connection rule}.

\begin{theorem}[1-connection rule] \label{formula:phij}
Let $G=(V, D, B)$ be a mixed graph on $n$ vertices. Let $\Sigma=[\sig_{ij}]=(I-\Lam)^{-T}\Omega (I-\Lam)^{-1}\in \calM_\calG$ for some $\Lam\in \R^\calD_{\mbox{reg}}$ and $\Omega\in PD(\calB).$ Then 
  \begin{align} \label{formula:oneconnectionrule}
    \sig_{ij} = \frac{\sum_{k,l=1}^n \left[ \sum_{C\in \calC_{l\to i}} (-1)^{c(C)+1} w(C)\right] \omega_{lk} \left[ \sum_{C' \in \calC_{k\to j}} (-1)^{c(C')+1} w(C') \right]}{\left[\sum_{L\in \mathcal{L}} (-1)^{c(L)} w(L)\right]^2},
  \end{align}
  where $\calL$ is the set of linear subgraphs of $\widetilde{D}$, and $\calC_{a \to b}$ is the set of 1-connections of $\widetilde{D}$ from $a$ to $b$.
\end{theorem}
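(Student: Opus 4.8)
The plan is to expand the matrix product $\Sig = (I-\Lam)^{-T}\Omega(I-\Lam)^{-1}$ entrywise and then substitute the closed-form expression for the entries of $(I-\Lam)^{-1}$ already supplied by Proposition~\ref{prop:lamij}. Writing out the $(i,j)$ entry of a triple product of matrices gives
$$\sig_{ij} = \sum_{l,k=1}^{n} \left((I-\Lam)^{-T}\right)_{il}\,\omega_{lk}\,\left((I-\Lam)^{-1}\right)_{kj},$$
so the first task is to rewrite the two factors involving the inverse of $I-\Lam$ in terms of the 1-connections of $\widetilde{D}$.

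The key observation is the transpose symmetry $\left((I-\Lam)^{-T}\right)_{il} = \left((I-\Lam)^{-1}\right)_{li}$, which lets me apply Proposition~\ref{prop:lamij} directly to both factors. That proposition already records that $\left((I-\Lam)^{-1}\right)_{li}$ and $\left((I-\Lam)^{-1}\right)_{kj}$ equal, respectively, the $1$-connection sum over $\calC_{l\to i}$ and the $1$-connection sum over $\calC_{k\to j}$, each divided by the common linear-subgraph sum $\sum_{L\in\calL}(-1)^{c(L)}w(L)$. Substituting these two expressions into the display above and noting that every summand now carries the same denominator, I can pull that denominator out of the double sum over $l,k$; since it enters once from each of the two inverse factors, it becomes its square, yielding exactly~\eqref{formula:oneconnectionrule}.

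The only point requiring care---and the closest thing to an obstacle---is the bookkeeping of indices and transposes: one must track that the left factor contributes a $1$-connection from $l$ to $i$ (because the transpose turns the row index $i$ of $(I-\Lam)^{-T}$ into the target of the connection) while the right factor contributes a $1$-connection from $k$ to $j$, and that the summation indices $l,k$ are precisely those paired by $\omega_{lk}$. Once these are matched against the statement, the identity follows by the single algebraic step of clearing the common denominator; no further estimates or structural arguments about $\widetilde{D}$ are needed, since all of the combinatorial content has already been packaged into Proposition~\ref{prop:lamij}.
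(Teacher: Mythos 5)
Your proposal is correct and follows exactly the paper's own argument: the paper likewise expands $\sigma_{ij}=\sum_{l,k}(I-\Lambda)^{-1}_{li}\,\omega_{lk}\,(I-\Lambda)^{-1}_{kj}$ and substitutes the $1$-connection expression from Proposition~\ref{prop:lamij}, squaring the common denominator. Your additional care with the transpose and index bookkeeping is sound and matches the statement.
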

\begin{proof}The proof follows from the fact that $$\sig_{ij}= \sum_{l=1}^n \sum_{k=1}^n (I-\Lambda)^{-1}_{li} \omega_{lk} (I-\Lambda)^{-1}_{kj} $$ and equation (\ref{formula:mij}).
\end{proof}

Note that the formula of $\sig_{ij}$ reduces to the trek rule given in \cite{sullivant2010trek} when the mixed graph is acyclic due to equation (\ref{eq:1ml}). Recall that any trek $\tau$ between $i$ and $j$ is a path of the form
\begin{align*}
  \begin{cases}
    i = i_0 \leftarrow i_1 \leftarrow \dotsb \leftarrow i_s \leftrightarrow j_t \rightarrow \dotsb \rightarrow j_1 \rightarrow j_0 = j  , &\text{ if }i_s \neq j_t\\
    i = i_0 \leftarrow i_1 \leftarrow \dotsb \leftarrow i_s = j_t \rightarrow \dotsb \rightarrow j_1 \rightarrow j_0 = j  , &\text{ if }i_s = j_t
  \end{cases}
\end{align*}
We will denote by $w(\tau)$ the \emph{trek monomial} corresponding to $\tau$, given by
\begin{align*}
  w(\tau) = \prod_{k = 1}^s \lambda_{i_{k}, i_{k-1}} \cdot \omega_{i_s,j_t} \cdot \prod_{k=1}^t \lambda_{j_k,j_{k-1}}.
\end{align*}

\begin{corollary}[Trek rule, \cite{sullivant2010trek}]\label{cor:trekrule}
  Let $G = (V, D, B)$ be an acyclic mixed graph. Let $\Lambda \in \mathbb{R}^D$ and $\Omega \in PD(B)$. Then the entries of the covariance matrix $\Sigma=[\sig_{ij}]=(I-\Lam)^{-T}\Omega (I-\Lam)^{-1}$ are given by
  \begin{align*}
    \sigma_{ij} = \sum_{\tau \in \mathcal{T}(i,j)} w(\tau),
  \end{align*}
  where $\mathcal{T}(i,j)$ is the set of treks from $i$ to $j$.
\end{corollary}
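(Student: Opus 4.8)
The plan is to specialize the general 1-connection rule (Theorem \ref{formula:oneconnectionrule}) to the acyclic case and show that the resulting sum of products of path monomials and $\omega$-entries reorganizes exactly into a sum over treks. The main engine is the already-established equation \eqref{eq:1ml}, which tells us that for an acyclic $D$ the entry $\left((I-\Lambda)^{-1}\right)_{ij}$ equals $\sum_{p \in \mathcal P(i,j)} \prod_{(k,l)\in p} \lambda_{kl}$, a finite sum of path monomials.

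First I would start from the factorization stated in the proof of Theorem \ref{formula:phij}, namely
\begin{equation*}
\sigma_{ij} = \sum_{l=1}^n \sum_{k=1}^n \left(I-\Lambda\right)^{-1}_{li}\, \omega_{lk}\, \left(I-\Lambda\right)^{-1}_{kj}.
\end{equation*}
Since $D$ is acyclic, I would substitute \eqref{eq:1ml} for each of the two inverse entries, rewriting $\left(I-\Lambda\right)^{-1}_{li} = \sum_{p_L \in \mathcal P(l,i)} \prod_{(a,b)\in p_L}\lambda_{ab}$ and similarly for $\left(I-\Lambda\right)^{-1}_{kj}$. This turns $\sigma_{ij}$ into a triple sum over indices $l,k$ and over pairs of directed paths $(p_L, p_R)$, with each summand equal to $\left(\prod_{(a,b)\in p_L}\lambda_{ab}\right)\omega_{lk}\left(\prod_{(c,d)\in p_R}\lambda_{cd}\right)$, where $p_L$ is a directed path from $l$ to $i$ and $p_R$ a directed path from $k$ to $j$.

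Next I would match this data to the combinatorial definition of a trek. Reversing the orientation, a directed path $p_L$ from $l$ to $i$ is exactly the left arm $i = i_0 \leftarrow i_1 \leftarrow \dots \leftarrow i_s = l$ of a trek, and $p_R$ from $k$ to $j$ is the right arm $k = j_t \rightarrow \dots \rightarrow j_0 = j$; the $\lambda$-products over $p_L$ and $p_R$ are precisely the factors $\prod_k \lambda_{i_k,i_{k-1}}$ and $\prod_k \lambda_{j_k,j_{k-1}}$ in $w(\tau)$. The middle factor $\omega_{lk} = \omega_{i_s,j_t}$ supplies the peak. I would split into two cases: when $l \neq k$, the factor $\omega_{lk}$ is (generically) an off-diagonal entry corresponding to a bidirected edge $l \leftrightarrow k$, giving the $i_s \neq j_t$ case of a trek; when $l = k$, the factor $\omega_{ll}$ is a diagonal entry and the two sources coincide, giving the $i_s = j_t$ case. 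In either case the summand equals $w(\tau)$ for the trek $\tau$ determined by the triple $(p_L, p_M, p_R)$. Conversely, every trek $\tau \in \mathcal T(i,j)$ arises from exactly one such triple $(l,k,p_L,p_R)$, so the correspondence is a bijection and $\sigma_{ij} = \sum_{\tau \in \mathcal T(i,j)} w(\tau)$.

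The step I expect to require the most care is the bijection bookkeeping, specifically making sure the peak is counted exactly once and that the $\omega$ indexing lines up — $\omega$ is symmetric, so $\omega_{lk}$ and $\omega_{kl}$ refer to the same bidirected edge, and one must confirm that the trek definition (which fixes which arm ends at $i$ versus $j$) does not double count or undercount peaks. Since $\Omega$ is allowed to be any matrix in $PD(B)$, entries $\omega_{lk}$ with $l \neq k$ and $l \leftrightarrow k \notin B$ vanish, so the sum over $l,k$ automatically restricts to source pairs that are either equal or joined by a bidirected edge, which is exactly the trek condition on $p_M$; I would note this to confirm no spurious terms survive. Everything else is a direct substitution, so beyond this indexing check the argument is routine.
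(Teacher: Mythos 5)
Your proposal is correct and follows essentially the same route as the paper: both reduce to the fact that in the acyclic case the entries of $(I-\Lambda)^{-1}$ are finite sums of path monomials (you invoke equation \eqref{eq:1ml} directly, while the paper re-derives this from the one-connection sums inside the corollary's proof), then expand $\sigma_{ij}=\sum_{l,k}(I-\Lambda)^{-1}_{li}\,\omega_{lk}\,(I-\Lambda)^{-1}_{kj}$ and identify each term with a trek monomial. Your explicit remark that $\omega_{lk}=0$ when $l\neq k$ and $l\leftrightarrow k\notin B$ is a nice clarification of a point the paper leaves implicit.
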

\begin{proof}
  Since there are no directed cycles, the only linear subgraph of $\widetilde D$ is the graph consisting of only the vertices and self-loops. Thus the denominator in \eqref{formula:oneconnectionrule} will be equal to 1.

  Fix some $i,j$. Again because $D$ is acyclic, for any path $p$ from $a$ to $b$, there is only a single 1-connection with path $p$, namely the subgraph of $\widetilde D$ consisting of $p$ and self-loops on every vertex not present in $p$. This means $\sum_{C \in \calC_{l \to i}} (-1)^{c(C) + 1} w(C) = (-1)^{n}\sum_{p \in \calP(l,i)} \prod_{(r, s) \in p} \lam_{rs}$, where $\calP(l,i)$ is the set of directed paths from $l$ to $i$ in $D$. Thus the entry $\sigma_{ij}$ of the covariance matrix $\Sigma$ will be the sum
  \begin{align*}
    \sigma_{ij} = \sum_{\substack{k,l = 1,\dotsc,n\\ p \in \calP(l,i) \\ q\in \calP(k,j)}} \prod_{(r, s) \in p} \lam_{rs} \cdot \omega_{lk} \cdot \prod_{(t, u) \in q} \lam_{tu}.
  \end{align*}
  We observe that the monomial $\prod_{(r, s) \in p} \lam_{rs} \cdot \omega_{lk} \cdot \prod_{(t, u) \in q} \lam_{tu}$ is exactly the trek monomial corresponding to the trek between $i$ and $j$ given by the union of $p$ and $q$ and, if $l \neq k$, the bidirected edge $l \leftrightarrow k$.
\end{proof}

The trek rule from Corollary \ref{cor:trekrule} can be extended to the case where $D$ has cycles as in Proposition 2.2 in \cite{draisma2013positivity}, but then the sum becomes an infinite expression. Treating the infinite sum as a formal power series, a rational expression can be found for each $\sigma_{ij}$ on a case-by-case basis as illustrated in Example 4.2 in \cite{drton2018algebraic}.  The advantage of Theorem \ref{formula:phij} is that it gives a closed form formula for each $\sigma_{ij}$ directly as a rational expression.

We now illustrate Theorem \ref{formula:phij} with an example using
Example 4.2 from  \cite{drton2018algebraic}.

\begin{figure}
\centering
\begin{subfigure}[c]{0.80\textwidth}
 \resizebox{\linewidth}{!}{
\begin{tikzpicture}
\tikzset{vertex/.style = {shape=circle,draw,minimum size=0.5em}}
\tikzset{edge/.style = {<->,thick, red, > = latex'}}
\node[vertex] (a) at (0,1) {1};
\node[vertex] (b) at (0,-1.5) {2};
\node[vertex] (c) at (1.5,0) {3};
\node[vertex] (d) at (3.5,0) {4};

\draw[edge] (d) to[bend right] (c);
\path[->, thick, blue] (a) edge  node[pos=0.25,below left] {$\lam_{12}$} (b);
\path[->, thick, blue] (a) edge  node[pos=0.25,above right] {$\lam_{13}$} (c);
\path[->, thick, blue] (b) edge  node[pos=0.25,above right] {$\lam_{23}$} (c);
\path[->, thick, blue] (c) edge  node[pos=0.55,below left] {$\lam_{34}$} (d);
\path[->, thick, blue] (d) edge  node[pos=0.50,below right] {$\lam_{42}$} (b);
\node at (1.5,-2.0) {$G$};

\tikzset{vertex/.style = {shape=circle,draw,minimum size=0.5em}}
\tikzset{edge/.style = {<->, thick, red, > = latex'}}
\node[vertex] (a) at (6.5,1) {1};
\node[vertex] (b) at (6.5,-1.5) {2};
\node[vertex] (c) at (8.5,0) {3};
\node[vertex] (d) at (10.5,0) {4};

\path[->, thick, blue] (a) edge  node[pos=0.25,below left] {$-\lam_{12}$} (b);
\path[->, thick, blue] (a) edge  node[pos=0.25,above right] {$-\lam_{13}$} (c);
\path[->, thick, blue] (b) edge  node[pos=0.25,above right] {$-\lam_{23}$} (c);
\path[->, thick, blue] (c) edge  node[pos=0.25,above right] {$-\lam_{34}$} (d);
\path[->, thick, blue] (d) edge  node[pos=0.50,below right] {$-\lam_{42}$} (b);

\path (a) edge [loop above] node {1} (a);
\path (b) edge [loop below] node {1} (b);
\path (c) edge [loop above] node {1} (c);
\path (d) edge [loop above] node {1} (d);
\node at (7.5,-2.0) {$\widetilde{D}$};
\end{tikzpicture}}
\caption{The mixed graph $G=(V, D, B)$ and the directed graph $\widetilde{D}$. \\ $\ $ \\ $\ $ \\ $\ $}
\label{exap1}
\end{subfigure}

\begin{subfigure}[c]{0.6\textwidth}
\centering
 \resizebox{\linewidth}{!}{
\begin{tikzpicture}
\tikzset{vertex/.style = {shape=circle,draw,minimum size=0.5em}}
\tikzset{edge/.style = {<->, thick, red, > = latex'}}
\node[vertex] (a) at (0,1) {1};
\node[vertex] (b) at (0,-1) {2};
\node[vertex] (c) at (1.0,0) {3};
\node[vertex] (d) at (2.5,0) {4};
\path[->, thick, blue] (a) edge  node[pos=0.25,above right] {$-\lam_{13}$} (c);
\path[->, thick, blue] (c) edge  node[pos=-0.1,above right] {$-\lam_{34}$} (d);
\path[->, thick, blue] (d) edge  node[pos=0.50,below right] {$-\lam_{42}$} (b);
\node at (1.5,-2.0) {$\in \calC_{1\to 2}$};

\tikzset{vertex/.style = {shape=circle,draw,minimum size=0.5em}}
\tikzset{edge/.style = {<->, thick, red, > = latex'}}
\node[vertex] (a) at (6.0,1) {1};
\node[vertex] (b) at (6.0,-1) {2};
\node[vertex] (c) at (6.5,0) {3};
\node[vertex] (d) at (7.5,0) {4};

\path[->, thick, blue] (a) edge  node[pos=0.25,below left] {$-\lam_{12}$} (b);
\path (c) edge [loop above] node {1} (c);
\path (d) edge [loop above] node {1} (d);
\node at (6.5,-2.0) {$\in \calC_{1\to 2}$};
\end{tikzpicture}}
\caption{The $1$-connections in $\calC_{1 \rightarrow 2}$. \\ $\ $ \\ $\ $ \\ $\ $}
\label{exapC12}
\end{subfigure}

\begin{subfigure}[c]{0.60\textwidth}
\centering
 \resizebox{\linewidth}{!}{
\begin{tikzpicture}
\tikzset{vertex/.style = {shape=circle,draw,minimum size=0.5em}}
\tikzset{edge/.style = {<->, thick, red, > = latex'}}
\node[vertex] (a) at (7.0,1) {1};
\node[vertex] (b) at (7.0,-1) {2};
\node[vertex] (c) at (8.0,0) {3};
\node[vertex] (d) at (9.5,0) {4};

\path[->, thick, blue] (a) edge  node[pos=0.25,below left] {$-\lam_{12}$} (b);
\path[->, thick, blue] (b) edge  node[pos=0.2,right] {$-\lam_{23}$} (c);
\path[->, thick, blue] (c) edge  node[pos=-0.1,above right] {$-\lam_{34}$} (d);

\node at (8.5,-2.0) {$\in \calC_{1\to 4}$};

\tikzset{vertex/.style = {shape=circle,draw,minimum size=0.5em}}
\tikzset{edge/.style = {<->, thick, red, > = latex'}}
\node[vertex] (a) at (12.5,1) {1};
\node[vertex] (b) at (12.5,-1) {2};
\node[vertex] (c) at (13.5,0) {3};
\node[vertex] (d) at (15.0,0) {4};

\path[->, thick, blue] (a) edge  node[pos=0.25,above right] {$-\lam_{13}$} (c);
\path[->, thick, blue] (c) edge  node[pos=-0.1,above right] {$-\lam_{34}$} (d);

\path (b) edge [loop below] node {1} (b);
\node at (14.5,-2.0) {$\in \calC_{1\to 4}$};
\end{tikzpicture}}
\caption{The $1$-connections in $\calC_{1\rightarrow 4}$.\\ $\ $ \\ $\ $ \\ $\ $}
\label{exapC14}
\end{subfigure}

\begin{subfigure}[c]{0.60\textwidth}
\centering
 \resizebox{\linewidth}{!}{
\begin{tikzpicture}
\tikzset{vertex/.style = {shape=circle,draw,minimum size=0.5em}}
\tikzset{edge/.style = {<->, thick, red, > = latex'}}
\node[vertex] (a) at (0.0,1) {1};
\node[vertex] (b) at (0.0,-1) {2};
\node[vertex] (c) at (1.0,0) {3};
\node[vertex] (d) at (2.0,0) {4};

\path (a) edge [loop above] node {1} (a);
\path (c) edge [loop above] node {1} (c);
\path (d) edge [loop above] node {1} (d);
\node at (1.5,-2.0) {$\in \calC_{2\to 2}$};

\tikzset{vertex/.style = {shape=circle,draw,minimum size=0.5em}}
\tikzset{edge/.style = {<->, thick, red, > = latex'}}
\node[vertex] (a) at (5.0,1) {1};
\node[vertex] (b) at (5.0,-1) {2};
\node[vertex] (c) at (6.0,0) {3};
\node[vertex] (d) at (7.5,0) {4};

\path[->, thick, blue] (b) edge  node[pos=0.2,right] {$-\lam_{23}$} (c);
\path[->, thick, blue] (c) edge  node[pos=-0.1,above right] {$-\lam_{34}$} (d);

\path (a) edge [loop above] node {1} (a);
\node at (6.5,-2.0) {$\in \calC_{2\to 4}$};
\end{tikzpicture}}
\caption{The $1$-connections in $\calC_{2\to 2}$,  $\calC_{2 \to 4}$. \\ $\ $ \\ $\ $ \\ $\ $ }
\label{exapC22C24}
\end{subfigure}
\caption{A mixed graph $G=(V,D,B)$ and all the $1$-connections and linear subgraphs of  $\widetilde{D}.$}
\end{figure}

\begin{figure}\ContinuedFloat 
\centering
\begin{subfigure}[c]{0.60\textwidth}
\centering
 \resizebox{\linewidth}{!}{
\begin{tikzpicture}
\tikzset{vertex/.style = {shape=circle,draw,minimum size=0.5em}}
\tikzset{edge/.style = {<->, thick, red, > = latex'}}
\node[vertex] (a) at (6.5,1) {1};
\node[vertex] (b) at (6.5,-1) {2};
\node[vertex] (c) at (7.0,0) {3};
\node[vertex] (d) at (8.5,0) {4};

\path (a) edge [loop above] node {1} (a);
\path[->, thick, blue] (c) edge  node[pos=-0.1,above right] {$-\lam_{34}$} (d);
\path[->, thick, blue] (d) edge  node[pos=0.50,below right] {$-\lam_{42}$} (b);
\node at (7.5,-2.0) {$\in \calC_{3\to 2}$};

\tikzset{vertex/.style = {shape=circle,draw,minimum size=0.5em}}
\tikzset{edge/.style = {<->, thick, red, > = latex'}}
\node[vertex] (a) at (11.5,1) {1};
\node[vertex] (b) at (11.5,-1) {2};
\node[vertex] (c) at (12.0,0) {3};
\node[vertex] (d) at (13.5,0) {4};

\path[->, thick, blue] (c) edge  node[pos=-0.1,above right] {$-\lam_{34}$} (d);

\path (a) edge [loop above] node {1} (a);
\path (b) edge [loop below] node {1} (b);
\node at (13.0,-2.0) {$\in \calC_{3\to 4}$};
\end{tikzpicture}}
\caption{The $1$-connections in $\calC_{3\to 2}$, and $\calC_{3 \to 4}$. \\ $\ $ \\ $\ $ \\ $\ $}
\label{exapC32C34}
\end{subfigure}

\begin{subfigure}[c]{0.60\textwidth}
\centering
 \resizebox{\linewidth}{!}{
\begin{tikzpicture}
\tikzset{vertex/.style = {shape=circle,draw,minimum size=0.5em}}
\tikzset{edge/.style = {<->, thick, red, > = latex'}}
\node[vertex] (a) at (0.0,1) {1};
\node[vertex] (b) at (0.0,-1) {2};
\node[vertex] (c) at (0.5,0) {3};
\node[vertex] (d) at (1.5,0) {4};

\path[->, thick, blue] (d) edge  node[pos=0.50,below right] {$-\lam_{42}$} (b);

\path (a) edge [loop above] node {1} (a);
\path (c) edge [loop above] node {1} (c);
\node at (1.0,-2.0) {$\in \calC_{4\to 2}$};

\tikzset{vertex/.style = {shape=circle,draw,minimum size=0.5em}}
\tikzset{edge/.style = {<->, thick, red, > = latex'}}
\node[vertex] (a) at (5.0,1) {1};
\node[vertex] (b) at (5.0,-1) {2};
\node[vertex] (c) at (6.0,0) {3};
\node[vertex] (d) at (7.0,0) {4};


\path (a) edge [loop above] node {1} (a);
\path (b) edge [loop below] node {1} (b);
\path (c) edge [loop above] node {1} (c);
\node at (6.5,-2.0) {$\in \calC_{4\to 4}$};
\end{tikzpicture}}
\caption{The 1-connections in $\calC_{4\to 2}, \calC_{4\to 4}$. \\ $\ $ \\ $\ $ \\ $\ $}
\label{exapC41C44}
\end{subfigure}

\begin{subfigure}[c]{0.60\textwidth}
\centering
 \resizebox{\linewidth}{!}{
\begin{tikzpicture}
\tikzset{vertex/.style = {shape=circle,draw,minimum size=0.5em}}
\tikzset{edge/.style = {<->, thick, red, > = latex'}}
\node[vertex] (a) at (6.5,1) {1};
\node[vertex] (b) at (6.5,-1.0) {2};
\node[vertex] (c) at (7.0,0) {3};
\node[vertex] (d) at (8.0,0) {4};

\path (a) edge [loop above] node {1} (a);
\path (b) edge [loop below] node {1} (b);
\path (c) edge [loop above] node {1} (c);
\path (d) edge [loop above] node {1} (d);
\node at (7.5,-2.0) {$L_1$};

\tikzset{vertex/.style = {shape=circle,draw,minimum size=0.5em}}
\tikzset{edge/.style = {<->, thick, red, > = latex'}}
\node[vertex] (a) at (11.5,1) {1};
\node[vertex] (b) at (11.5,-1.0) {2};
\node[vertex] (c) at (12.5,0) {3};
\node[vertex] (d) at (14.0,0) {4};

\path[->, thick, blue] (b) edge  node[pos=0.8,left] {$-\lam_{23}$} (c);
\path[->, thick, blue] (c) edge  node[pos=-0.1,above right] {$-\lam_{34}$} (d);
\path[->, thick, blue] (d) edge  node[pos=0.50,below right] {$-\lam_{42}$} (b);

\path (a) edge [loop above] node {1} (a);
\node at (12.5,-2.0) {$L_2$};
\end{tikzpicture}}
\caption{The linear subgraphs of $\widetilde{D}$. \\ $\ $ \\ $\ $ \\ $\ $}
\label{exapL}
\end{subfigure}
\caption{A mixed graph $G=(V,D,B)$ and all the $1$-connections and linear subgraphs of  $\widetilde{D} $ (cont.)}
\label{exap:1}
\end{figure}
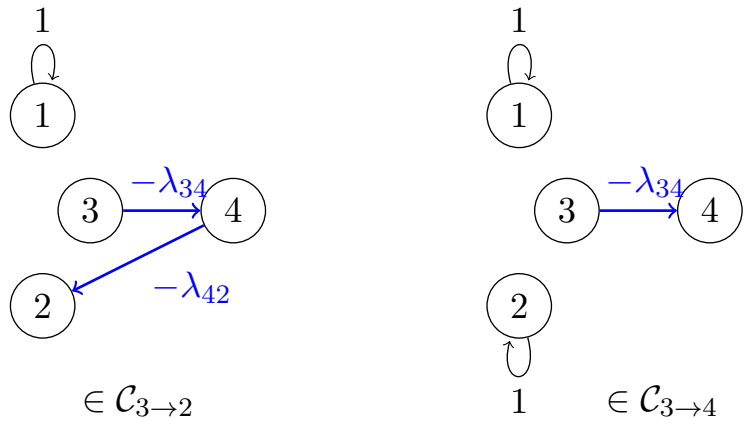
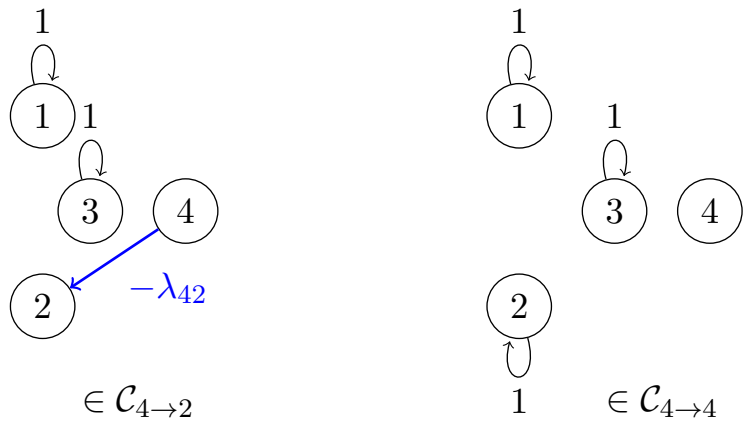
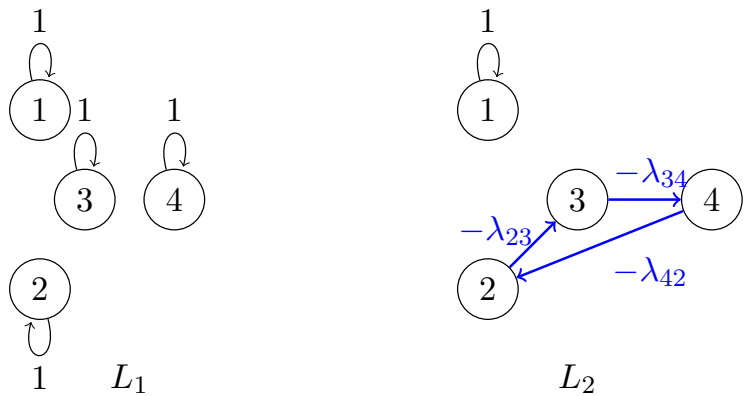

\begin{example}
Consider the mixed graph in Figure \ref{exap1}. Let us calculate $\sig_{24}.$ From the description of the linear subdigraphs of $\widetilde{D}$ in Figure \ref{exapL} the denominator of $\sig_{ij}$ from Theorem \ref{formula:inv} is given by $$\left[ (-1)^{c(L_1)} w(L_1) + (-1)^{c(L_2)} w(L_2)\right]^2 = ( 1 - \lam_{23}\lam_{34}\lam_{42} )^2.$$

We will look at the terms in the numerator that contain $\omega_{11}$, that is,
\begin{align*}
  \left( \sum_{C \in \calC_{1\to 2}}(-1)^{c(C)+1} w(C) \right) \omega_{11} \left(\sum_{C' \in \calC_{1\to 4}} (-1)^{c(C')+1} w(C')\right).
\end{align*}
Since $\calC_{1 \to 2}$ contains two 1-connections, we get the sum
\begin{align*}
  \left( \sum_{C \in \calC_{1\to 2}}(-1)^{c(C)+1} w(C) \right) =& (-1)^{0+1}(-\lambda_{13})(-\lambda_{34})(-\lambda_{42}) + (-1)^{2+1}(-\lambda_{12})\\ 
  =& \lambda_{13}\lambda_{34}\lambda_{42} + \lambda_{12}.
\end{align*}

\noindent The collection $\calC_{1 \to 4}$ also contains two 1-connections, so we have
\begin{align*}
  \left(\sum_{D\in \calC_{1\to 4}} (-1)^{c(D)+1} w(D)\right) =& (-1)^{0+1}(-\lambda_{12})(-\lambda_{23})(-\lambda_{34}) + (-1)^{1+1}(-\lambda_{13})(-\lambda_{34})\\
   =& \lambda_{12}\lambda_{23}\lambda_{34} + \lambda_{13}\lambda_{34}.
\end{align*}
Thus the terms  of the sum that contain $\omega_{11}$ will be $(\lambda_{13}\lambda_{34}\lambda_{42} + \lambda_{12}) \omega_{11} (\lambda_{12}\lambda_{23}\lambda_{34} + \lambda_{13}\lambda_{34})$. We perform  similar computations to obtain the full numerator of $\sigma_{24}$

Thus the terms  of the sum that contain $\omega_{11}$ will be $(\lambda_{13}\lambda_{34}\lambda_{42} + \lambda_{12}) \omega_{11} (\lambda_{12}\lambda_{23}\lambda_{34} + \lambda_{13}\lambda_{34})$. We perform  similar computations to obtain the full numerator of $\sigma_{24}$
\begin{align*} 
&(\lambda_{13}\lambda_{34}\lambda_{42} + \lambda_{12})\omega_{11} (\lam_{12}\lam_{23}\lam_{34} + \lam_{13}\lam_{34}) \\
+& \omega_{22}\lam_{23}\lam_{34} + \omega_{33}\lam_{34}^2\lam_{42} + \omega_{44}\lam_{42} + 2\omega_{34}\lam_{34}\lam_{42}.
\end{align*}

\end{example}

One of the benefits of Theorem \ref{formula:phij} is that it gives us a tool for establishing identifiability results.  In particular, our second main theorem below, Theorem \ref{thm:ident}, uses the closed form formula stated in Theorem \ref{formula:phij} to show that if $G$ is a simple mixed graph, then $\phi_G$ is generically finite-to-one, in other words, the covariance matrix $\Sigma$ is \emph{generically locally identifiable}.  Recall that $\phi_G$ is generically finite-to-one if $\phi_G$ is locally injective for almost all $(\Lambda, \Omega) \in \mathbb R_{reg}^D \times PD(B)$. A mixed graph $G=(V, D,B)$ is simple if there is at most one edge (directed or bidirected) between any two vertices $i \neq j$. 
\begin{theorem} \label{thm:ident}
If $G = (V, D, B)$ is simple, then $\phi_G$ is generically finite-to-one. 
\end{theorem}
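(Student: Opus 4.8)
The plan is to reduce the statement to an injectivity property of the differential of $\phi_G$. Since the paper defines "generically finite-to-one" as local injectivity of $\phi_G$ for almost all $(\Lambda,\Omega)$, and an injective differential at a point makes $\phi_G$ an immersion there (hence locally injective), it suffices to show that $d\phi_G$ is injective at a generic parameter point. The entries of $\Jac\,\phi_G$ are rational functions of $(\Lambda,\Omega)$ that are regular wherever $\det(I-\Lambda)\neq 0$, so the locus where $\Jac\,\phi_G$ fails to have full column rank is Zariski-closed. Moreover $\R_{reg}^D\times PD(B)$ is a (Euclidean-)open, hence Zariski-dense, subset of the irreducible affine space $\{\Lambda:\operatorname{supp}\Lambda\subseteq D\}\times\{\Omega=\Omega^T:\operatorname{supp}\Omega\subseteq\operatorname{diag}\cup B\}$. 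Therefore it is enough to exhibit a single point of this ambient space, with $\det(I-\Lambda)\neq0$, at which $d\phi_G$ is injective: the full-rank locus is then a nonempty Zariski-open set, which meets the parameter space in a dense open subset.

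First I would compute the differential at the convenient point $\Lambda=0$, $\Omega=I$ (so $\det(I-\Lambda)=1\neq0$). Using $d\big[(I-\Lambda)^{-1}\big]=(I-\Lambda)^{-1}\,\dot\Lambda\,(I-\Lambda)^{-1}$ together with the product rule applied to $\Sigma=(I-\Lambda)^{-T}\Omega(I-\Lambda)^{-1}$, evaluation at this point collapses to the clean expression
\[
  d\phi_G(\dot\Lambda,\dot\Omega)=\dot\Lambda^{T}+\dot\Lambda+\dot\Omega,
\]
where $\dot\Lambda$ ranges over matrices supported on $D$ and $\dot\Omega$ over symmetric matrices supported on the diagonal together with $B$. (Equivalently, this is the linearization of the formula in Theorem~\ref{formula:phij} at $\Sigma=I$, which ties the computation to the one-connection rule.)

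The key step, and where simplicity of $G$ is used, is the kernel analysis. Suppose $\dot\Lambda^{T}+\dot\Lambda+\dot\Omega=0$, and read off the off-diagonal entries $(i,j)$, $i\neq j$, with $\{i,j\}\notin B$: there $\dot\omega_{ij}=0$, so the equation reduces to $\dot\lambda_{ij}+\dot\lambda_{ji}=0$. Now fix a directed edge $(i,j)\in D$. Because $G$ is simple, the pair $\{i,j\}$ carries no bidirected edge, so $\{i,j\}\notin B$ and this equation is active; simplicity also forbids the reverse edge, so $(j,i)\notin D$ and $\dot\lambda_{ji}=0$. Hence the equation becomes $\dot\lambda_{ij}=0$. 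As this holds for every directed edge, $\dot\Lambda=0$, after which the relation forces $\dot\Omega=0$. Thus $d\phi_G$ is injective at $(\Lambda,\Omega)=(0,I)$, and by the semicontinuity argument above the differential is injective — hence $\phi_G$ is locally injective — at a generic $(\Lambda,\Omega)\in\R_{reg}^D\times PD(B)$.

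The main obstacle is conceptual rather than computational: recognizing that generic finiteness is governed by the generic rank of $d\phi_G$, that this rank may be certified at any one convenient point, and that choosing $\Lambda=0,\Omega=I$ simultaneously trivializes the differential to $\dot\Lambda^{T}+\dot\Lambda+\dot\Omega$ and exposes exactly where simplicity is needed (both to make the $(i,j)$-equation an active vanishing constraint and to kill the reverse entry $\dot\lambda_{ji}$). The one delicate bookkeeping point is that $\Lambda=0$ lies on the boundary of $\R_{reg}^D$ and is not itself an admissible parameter when $D\neq\emptyset$; this is precisely what the density/semicontinuity argument handles, using that $\Jac\,\phi_G$ is defined at $(0,I)$ and that a nonempty Zariski-open set meets the dense parameter space in a dense open subset. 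I would finally remark that this argument yields only finite-to-one and not injectivity: in contrast to the acyclic case of \cite{brito2002new}, cycles may leave the generic fiber finite but nontrivial.
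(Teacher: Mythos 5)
Your proposal is correct and follows essentially the same route as the paper: both arguments certify that the Jacobian of $\phi_G$ has full generic rank $|V|+|D|+|B|$ by evaluating at $\Lambda=0$ with a convenient $\Omega$, where simplicity guarantees that each pair $i\neq j$ carries at most one active parameter among $\lambda_{ij},\lambda_{ji},\omega_{ij}$, so the relevant submatrix of the Jacobian is the identity. The only cosmetic differences are that the paper first clears denominators by passing to the numerator polynomial map $\tilde\phi_G$ and phrases the conclusion via $\dim(\im\phi_G)$, whereas you work with the rational map directly and are more explicit about why the rank certificate at the inadmissible point $\Lambda=0$ transfers to generic admissible parameters.
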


\begin{proof}  The map $\phi_{G}$ is a rational map such that each coordinate function is of the form $(\phi_G)_{ij} = \sigma_{ij} = f_{ij}/g_{ij}$ where $f_{ij}$ and $g_{ij}$ are respectively the numerator and denominator of equation \eqref{formula:oneconnectionrule}. Note that the denominator in \eqref{formula:oneconnectionrule} is the same for all $i,j$, thus we can write $g_{ij}=g$ and $(\phi_G)_{ij} = \sigma_{ij} = f_{ij}/g$ for all $i,j$.  To show that $\phi_G$ is generically finite-to-one, we will show that $\dim (\im( \phi_G)) = |V| + |D| + |B|$.

Consider the  polynomial map $\tilde \phi_{G}$ defined by setting each coordinate function to be $(\tilde \phi_G)_{ij} = f_{ij}$.  Since (i) $\im (\tilde \phi_G) \subseteq \im (\phi_G)$, (ii) the Zariski closure of both $\im (\tilde \phi_G)$ and $\im (\phi_G)$ are irreducible, and (iii) the maximum dimension of $\im (\phi_G)$ is $|V| + |D| + |B|$, to show that $\dim (\im( \phi_G)) = |V| + |D| + |B|$, it suffices to show that $\dim (\im (\tilde \phi_G)) = |V| + |D| + |B|$.  We will do this showing the Jacobian matrix of $\tilde \phi_G$, which we will denote by $\Jac(\tilde \phi_G)$, has full generic rank.

The matrix $\Jac(\tilde \phi_G)$ is a $(|V| + |B| + |D|) \times {\binom{|V|}{2}} $ matrix where the $ij$th column is the gradient of $\sigma_{ij}$.  Let us look at the  $(|V| + |B| + |D|) \times (|V| + |B| + |D|)$ submatrix $M$ of $\Jac(\tilde \phi_G)$ that involves the columns corresponding to $\sigma_{ii}$ for all $i$ and $\sigma_{ij}$ if $(i,j)$ is an edge in $D$ or $B$.  We can label the rows of the matrix by the $\omega$ and $\lambda$ parameters.  Now let us consider the columns of this submatrix. 

Let $\mathcal C_{i \to j}^{kl}$ denote the set of one connections of $\tilde D$ from $i \to j$ that contain the edge $(k, l)$, and let $\mathcal C_{i \to j}^{-kl}$ denote the set of one connections of $\tilde D$ from $i \to j$ that do not contain the edge $(k, l)$. Note that the columns of $M$ corresponding to $\sigma_{ii}$ and $\sigma_{ij}$ have the following respective forms:

{\small
$$\nabla \sigma_{ii} = \begin{pmatrix}
\vdots \\
\frac{\partial \sigma_{ii}}{\partial \omega_{rr}}  \\
\vdots \\
\frac{\partial \sigma_{ii}}{\partial \omega_{rs}}  \\
\vdots \\
\frac{\partial \sigma_{ii}}{\partial \lambda_{rs}} \\
\vdots
\end{pmatrix} =  
\begin{pmatrix}
\vdots \\
 \left[ \sum_{C\in \calC_{r\to i}} (-1)^{c(C)+1} w(C)\right]  \left[\sum_{C' \in \calC_{r\to i}} (-1)^{c(C')+1} w(C') \right]  \\
\vdots \\
 \left[ \sum_{C\in \calC_{r\to i}} (-1)^{c(C)+1} w(C)\right]  \ \left[\sum_{C' \in \calC_{s\to i}} (-1)^{c(C')+1} w(C') \right]   \\
\vdots \\
\sum_{k,l=1}^n \left\{  \left[ \sum_{C\in \calC^{rs}_{l\to i}} (-1)^{c(C)+1}\frac{w(C)}{\lambda_{rs}} \right] \omega_{lk} \left[ \sum_{C' \in \calC^{- rs}_{k\to i}} (-1)^{c(C')+1} w(C') \right] \right.
\\
+  \left[ \sum_{C\in \calC^{-rs}_{l\to i}} (-1)^{c(C)+1}w(C) \right] \omega_{lk} \left[ \sum_{C' \in \calC^{rs}_{k\to i}} (-1)^{c(C')+1} \frac{w(C')}{\lambda_{rs}} \right]
\\
+  \left.\left[ 2\sum_{C\in \calC^{rs}_{l\to i}} (-1)^{c(C)+1}\frac{w(C)}{\lambda_{rs}} \right] \omega_{lk} \left[ \sum_{C' \in \calC^{rs}_{k\to i}} (-1)^{c(C')+1} w(C') \right] \right\}
\\
\vdots
\end{pmatrix},$$

{\small
$$\nabla \sigma_{ij} = \begin{pmatrix}
\vdots \\
\frac{\partial \sigma_{ij}}{\partial \omega_{rr}}  \\
\vdots \\
\frac{\partial \sigma_{ij}}{\partial \omega_{rs}}  \\
\vdots \\
\frac{\partial \sigma_{ij}}{\partial \lambda_{rs}} \\
\vdots
\end{pmatrix} =  
\begin{pmatrix}
\vdots \\
 \left[ \sum_{C\in \calC_{r\to i}} (-1)^{c(C)+1} w(C)\right]  \left[\sum_{C' \in \calC_{r\to j}} (-1)^{c(C')+1} w(C') \right]  \\
\vdots \\
 \left[ \sum_{C\in \calC_{r\to i}} (-1)^{c(C)+1} w(C)\right]  \ \left[\sum_{C' \in \calC_{s\to j}} (-1)^{c(C')+1} w(C') \right]   \\
\vdots \\
\sum_{k,l=1}^n \left\{  \left[ \sum_{C\in \calC^{rs}_{l\to i}} (-1)^{c(C)+1}\frac{w(C)}{\lambda_{rs}} \right] \omega_{lk} \left[ \sum_{C' \in \calC^{- rs}_{k\to j}} (-1)^{c(C')+1} w(C') \right] \right.
\\
+  \left[ \sum_{C\in \calC^{-rs}_{l\to i}} (-1)^{c(C)+1}w(C) \right] \omega_{lk} \left[ \sum_{C' \in \calC^{rs}_{k\to j}} (-1)^{c(C')+1} \frac{w(C')}{\lambda_{rs}} \right]
\\
+  \left.2 \left[ \sum_{C\in \calC^{rs}_{l\to i}} (-1)^{c(C)+1}\frac{w(C)}{\lambda_{rs}} \right] \omega_{lk} \left[ \sum_{C' \in \calC^{rs}_{k\to j}} (-1)^{c(C')+1} w(C') \right] \right\}
\\
\vdots
\end{pmatrix}$$
}

\noindent With these columns in mind, let us evaluate $M$ at the point $p= (\ldots, \omega_{ii}, \ldots, \omega_{ij}, \ldots, \lambda_{ij}, \ldots)$ where $\omega_{ij} = 1$ for all $\omega_{ij}$ and $\lambda_{ij}=0$ for all $\lambda_{ij}$ . By noting that the edge weight $w(C) =1$ only when $C$ is a 1-connection from $i \to i$ with all other vertices covered by self-loops and recalling that $G$ is simple,  we see: $\frac{\partial \sigma_{ii}}{\partial \omega_{rr}} |_{p} = 1$ when $r=i$ and zero otherwise; $\frac{\partial \sigma_{ii}}{\partial \omega_{rs}} |_{p} = 0$ when $r \neq s$; $\frac{\partial \sigma_{ii}}{\partial \lambda_{rs}} |_{p} = 0$ when $(r, s) \notin B$; $\frac{\partial \sigma_{ij}}{\partial \omega_{rr}} |_{p} = 0$ when $i \neq j$; $\frac{\partial \sigma_{ij}}{\partial \omega_{rs}} |_{p} = 1$ when $r =i$ and $s = j$, and zero otherwise; and 
$\frac{\partial \sigma_{ij}}{\partial \lambda_{rs}} |_{p} = 1$ when $r =i$ and $s = j$, and zero otherwise. Thus, up to a possible reordering of columns, $M$ evaluated at $p$ is the identity matrix and, consequently, $\Jac(\tilde \phi_G)$, has full rank when evaluated at $p$.  Therefore $\Jac(\tilde \phi_G)$ has full generic rank and $\phi_G$ is generically finite-to-one.
}
\end{proof}

\section{Symbolic computation of covariance matrices using linear subgraphs and 1-connections}
\label{sec:linear-1-connection}

In order to explore theoretical properties of structural equation models, such as identifiability, having a straightforward way to compute the covariance matrix $\Sigma = \phi_{G}(\Lambda, \Omega) = (I-\Lambda)^{-T}\Omega(I-\Lambda)^{-1}$ for matrices $\Lambda$ and $\Omega$ with undetermined entries   can be quite helpful.  As the number of nodes in the mixed graph gets large, the naive symbolic computation of the covariance matrix $\Sigma = \phi_{G}(\Lambda, \Omega) = (I-\Lambda)^{-T}\Omega(I-\Lambda)^{-1}$ via Gaussian elimination becomes computationally challenging.
Ideally, we want to exploit the sparsity structure of the graph to compute the covariance matrix. 

In the acyclic case, the trek rule achieves this: we can construct all treks by considering every path between every pair of
vertices  and gluing them together along bidirected edges or along vertices when the paths start at the same vertex. In the cyclic
case however, the trek rule gives us entries of the covariance matrix
as a formal power series. The advantage of the representation of
$(I-\Lambda)^{-1}$ in Proposition \ref{prop:lamij} is that entries of the inverse are written explicitly as  rational functions, with sums of a
finite number of terms in the numerator and denominator. The
computational problem now becomes to find all linear subgraphs and
1-connections.
We will observe in Section~\ref{subsec:experiments}
that in the case of random graphs with few cycles, using the
1-connection method to symbolically compute the covariance matrix is
much faster than using ``naive'' symbolic matrix inversion. Throughout
this section we will assume that $\calG = (\calV, \calD, \calB)$ is a
mixed graph with $n$ vertices so that $I, \Lambda, \Omega$ and
$\Sigma$ are all $(n\times n)$ matrices.

In the next subsection, Section ~\ref{sec:linear-subgraphs}, we present
the algorithm for the computation of the linear subgraphs, next in
Section~\ref{sec:1-connections} we present the algorithm for the
1-connections.  Section~\ref{sec:main-alg} introduces our main
algorithm and in Section~\ref{subsec:experiments} we present our
implementation and experiments on various data sets.

\subsection{Linear subgraphs}
\label{sec:linear-subgraphs}

We begin by computing the determinant of $(I-\Lambda)^{T}$ by using
linear subgraphs. Recall the Coates formula for the determinant in Definition 
\ref{def:coatesDeterminant}, that is,
\begin{align*}
    \det A = \sum_{L \in \calL} (-1)^{n-c(L)} w(L),
\end{align*}
where $\calL$ is the set of linear subgraphs of $A^T$. As noted in
Section \ref{sec:prelim}, a linear subgraph is a spanning collection of
vertex-disjoint cycles in the Coates digraph, which for the case of
$(I-\Lambda)^T$ is the graph $\widetilde \calD$, i.e. the directed part of $\calD$, with self-loops added to each vertex and negative weights. Hence any vertex-disjoint set of
cycles $S$ in $\calD$ gives a unique linear subgraph of $\widetilde D$ by adding
the self-loops of every vertex not present in $S$. Since the weights of
the self-loops are all 1, we have $w(L) = \prod (-\lambda_{i,j})$,
where the product is taken over all edges in $S$. If $S$ contains
$c_S$ cycles and $v_S$ vertices, we have to add $n - v_S$ self-loops to
create the linear subgraph $L \subseteq \widetilde D$. Thus the exponent of $(-1)$ becomes
\begin{align*}
    n - c(L) = n - (c_S + n - v_S) = v_S - c_S.
\end{align*}
The procedure above is summarized in Algorithm \ref{alg:linSubgraphDet}.

\begin{algorithm}
    \caption{Compute the determinant of $(I - \Lambda)$ using linear subgraphs}
    \label{alg:linSubgraphDet}
    \begin{algorithmic}[1]
        \Require A mixed graph $\calG = (\calV,\calD,\calB)$
        \Ensure A symbolic expression of $\mathrm{Det} = \det(I-\Lambda)$
        \Statex
        \Procedure{Det}{$\calG$}
            \State $\mathcal{C} \gets $\Call{Cycles}{$\calD$}
            \State $\mathcal{S} \gets $ all pairwise vertex disjoint subsets of $\mathcal{C}$
            \State $\mathrm{Det} \gets 0$
            \ForAll{$S \in \mathcal{S}$}
                \State $E_S \gets $\Call{Edges}{$S$}
                \State $v_S \gets \#$\Call{Vertices}{$S$}
                \State $c_S \gets \#$\Call{Cycles}{$S$}
                \State $W \gets \prod_{(i,j) \in E_S} (-\lambda_{i,j})$
                \State $\mathrm{Det} \gets \mathrm{Det} + (-1)^{v_S - c_S} W$
            \EndFor
            \State \Return $\mathrm{Det}$
        \EndProcedure
    \end{algorithmic}
\end{algorithm}

\subsection{1-connections}
\label{sec:1-connections}

Again, we will focus on computing the entries of $(I-\Lambda)^{-T}$, whose Coates digraph is $\widetilde \calD$. Given an invertible matrix $A$, we recall that its adjugate matrix $M$ is the numerator of the expression in Theorem \ref{formula:inv}, that is
\begin{align}\label{eq:oneconnmatrix}
    M_{ij} := \sum_{C \in \calC_{i \to j}} (-1)^{c(C) + 1} w(C),
\end{align}
where $\calC_{i\to j}$ is the collection of 1-connections from $i$ to $j$ in the Coates digraph of $A$.

We will compute the adjugate matrix of $A = (I - \Lambda)^T$. Recall that a 1-connection from $i$ to $j$ consists of a path from $i$ to $j$, and a set of cycles such that every vertex appears either in the path or cycles exactly once. In this case, a 1-connection $C$ from $i$ to $j$ consists of the following data: (i) a path $p$, (ii) a set $S$ consisting of $c_S$ cycles such that $\{p\} \cup S$ are pairwise vertex-disjoint, and (iii) self-loops for any vertex not appearing in either $p$ or $S$. Then
\begin{align*}
    (-1)^{c(C)+1} w(C) = (-1)^{c_S +n - v_S - v_p + 1} \prod \lambda_{k,l},
\end{align*}
where the product runs over all edges $k \to l$ appearing in $S \cup \{p\}$.
\Cref{alg:oneConnMatrix} summarizes the computation of the adjugate matrix.

\begin{algorithm}
    \caption{Compute the adjugate matrix of $(I-\Lambda)^T$.}
    \label{alg:oneConnMatrix}
    \begin{algorithmic}[1]
        \Require A mixed graph $\calG = (\calV,\calD,\calB)$
        \Ensure The symbolic $(n \times n)$ matrix $M = \det(I-\Lambda) \cdot (I-\Lambda)^{-T}$
        \Statex
        \Procedure{Adj}{$\calG$}
            \State $\mathcal{C} \gets $\Call{Cycles}{$\calD$}
            \State $\mathcal{S} \gets $ all pairwise vertex disjoint subsets of $\mathcal{C}$
            \State $M \gets 0$
            \ForAll{$1 \leq i,j \leq n$}
                \State $P \gets $ all directed paths from $i$ to $j$
                \ForAll{ $p \in P$}
                    \State $v_p \gets \#$\Call{Vertices}{$p$}
                    \State $\mathcal{S}_p \gets \{S \in \mathcal{S} \colon \text{$p$ and $S$ are pairwise vertex disjoint}\}$
                    \ForAll{$S \in \mathcal{S}_p$}
                        \State $E \gets$\Call{Edges}{$S \cup \{p\}$}
                        \State $v_S \gets \#$\Call{Vertices}{$S$}
                        \State $c_S \gets \#$\Call{Cycles}{$S$} 
                        \State $W \gets \prod_{(i,j) \in E} (-\lambda_{i,j})$
                        \State $M_{i,j} \gets M_{i,j} + (-1)^{c+n-v_S-v_p + 1} W$
                   \EndFor
                \EndFor
            \EndFor
            \State \Return M
        \EndProcedure
    \end{algorithmic}
\end{algorithm}

\subsection{Main algorithm}
\label{sec:main-alg}

\Cref{alg:mainAlgorithm} combines the subprocedures in  Algorithm \ref{alg:linSubgraphDet} and Algorithm  \ref{alg:oneConnMatrix}   to construct the covariance matrix $\Sigma =[\sigma_{ij}]=(I-\Lambda)^{-T}\Omega(I-\Lambda)^{-1}$ using only the combinatorics of the mixed graph $\calG$. The correctness of the algorithm is a result of Theorem \ref{formula:phij}. The denominator of $\sigma_{ij}$ is
\begin{align*}
    \left[\sum_{L \in \mathcal{L}} (-1)^{c(L)}w(L)\right]^2,
\end{align*}
which is equal to $(\det (I-\Lambda))^2$ by Definition \ref{def:coatesDeterminant}. For the numerator, using the adjugate matrix in \eqref{eq:oneconnmatrix}, we obtain $\sum_{k,l} M_{i,l} \Omega_{l,k} M_{j,k} = (M\Omega M^T)_{i,j}$.

\begin{algorithm}[h!]
    \caption{Compute the covariance matrix combinatorially}
    \label{alg:mainAlgorithm}
    \begin{algorithmic}[1]
        \Require A mixed graph $\calG = (\calV,\calD,\calB)$
        \Ensure The symbolic covariance matrix $\Sigma = (I-\Lambda)^{-T}\Omega(I-\Lambda)^{-1}$
        \Statex
        \Procedure{CovarianceMatrix}{$\calG$}
            \State $\Omega_{i,j} \gets \begin{cases} \omega_{i,j} & \text{if } i = j \text{ or } (i,j) \in \mathcal{B}\\
            0 & \text{otherwise}
            \end{cases}$
            \State $M \gets $\Call{Adj}{$\calG$}
            \State $D \gets$ \Call{Det}{$\calG$}
            \State \Return $(1/D^2) M \Omega M^T$
        \EndProcedure
    \end{algorithmic}
\end{algorithm}

\subsection{Computational experiments}
\label{subsec:experiments}

In this section we will compare algorithms for symbolically computing the covariance matrix $\phi_G(\Lambda,\Omega)$. We will compare the 1-connection method described in Algorithm \ref{alg:mainAlgorithm} against the ``naive'' method of computing $(I-\Lambda)^{-T}\Omega(I-\Lambda)^{-1}$ by symbolically inverting the matrix $(I - \Lambda)$ using the \texttt{Inverse} function in Mathematica. The dataset used for the computational experiments can be found in

\begin{quote}
\centering  
\url{https://gitlab.mis.mpg.de/harkonen/one-connections}
\end{quote}

\subsubsection{Cycle chains} 
\label{ssub:cycle_chains}
As a first experiment, we will consider chains of cycles. More precisely, we will construct a graph $\calG_{d,2\ell}$ consisting of $d$ directed cycles $c_0,\dotsc,c_{d-1}$ with $2\ell$ vertices in each. We will denote the vertices and edges in $c_i$ as $c_i = \{v_{i,0} \to v_{i,1} \to \dotsb \to v_{i,2\ell-1} \to v_{i,0}\}$. In addition, we connect each cycle with an edge $v_{i,\ell} \to v_{i+1,0}$ for all $i = 0,\dotsc,d-2$. As an example, \Cref{fig:cycleChainExample} depicts the graph $\calG_{3,4}$.

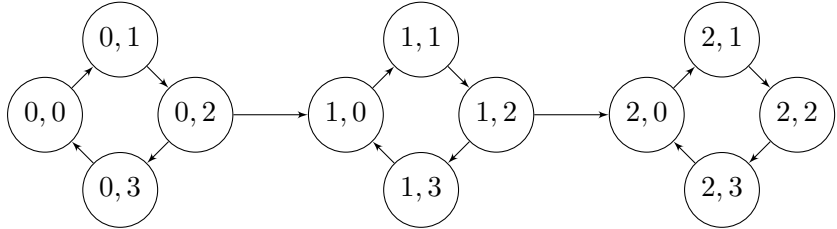
\begin{figure}
\centering
\begin{tikzpicture}
\tikzset{vertex/.style = {shape=circle,draw,minimum size=0.5em}}
\tikzset{edge/.style = {->,> = latex'}}
\node[vertex] (a) at (-5,0) {${0,0}$};
\node[vertex] (b) at (-4,1) {${0,1}$};
\node[vertex] (c) at (-3,0) {${0,2}$};
\node[vertex] (d) at (-4,-1) {${0,3}$};

\node[vertex] (e) at (-1,0) {${1,0}$};
\node[vertex] (f) at (0,1) {${1,1}$};
\node[vertex] (g) at (1,0) {${1,2}$};
\node[vertex] (h) at (0,-1) {${1,3}$};

\node[vertex] (i) at (3,0) {${2,0}$};
\node[vertex] (j) at (4,1) {${2,1}$};
\node[vertex] (k) at (5,0) {${2,2}$};
\node[vertex] (l) at (4,-1) {${2,3}$};

\draw[edge] (a) to (b);
\draw[edge] (b) to (c);
\draw[edge] (c) to (d);
\draw[edge] (d) to (a);

\draw[edge] (c) to (e);

\draw[edge] (e) to (f);
\draw[edge] (f) to (g);
\draw[edge] (g) to (h);
\draw[edge] (h) to (e);

\draw[edge] (g) to (i);

\draw[edge] (i) to (j);
\draw[edge] (j) to (k);
\draw[edge] (k) to (l);
\draw[edge] (l) to (i);
\end{tikzpicture}
\caption{An example of a chain of cycles, with $d = 3$ cycles of length $2\ell = 4$ each.}
\label{fig:cycleChainExample}
\end{figure}

The timings of the computations of the covariance matrices of $\calG_{d,2\ell}$ are shown in \Cref{fig:cycleChainPlots}, where the circles are timings, and the lines are fitted exponential functions of the form $\mathrm{Time} = ab^d$ for some $a,b$. When $2\ell=2$, the graphs have few vertices (ranging from 2 to 20), and the naive symbolic inversion of the matrix $I - \Lambda$ is faster than the 1-connection method. However, as the number of vertices increases, we observe that the 1-connection method eventually overtakes the naive method around $2\ell=6$.

\begin{figure}
  \centering
  \begin{subfigure}[t]{.5\textwidth}
    \centering
    \includegraphics[width=\textwidth]{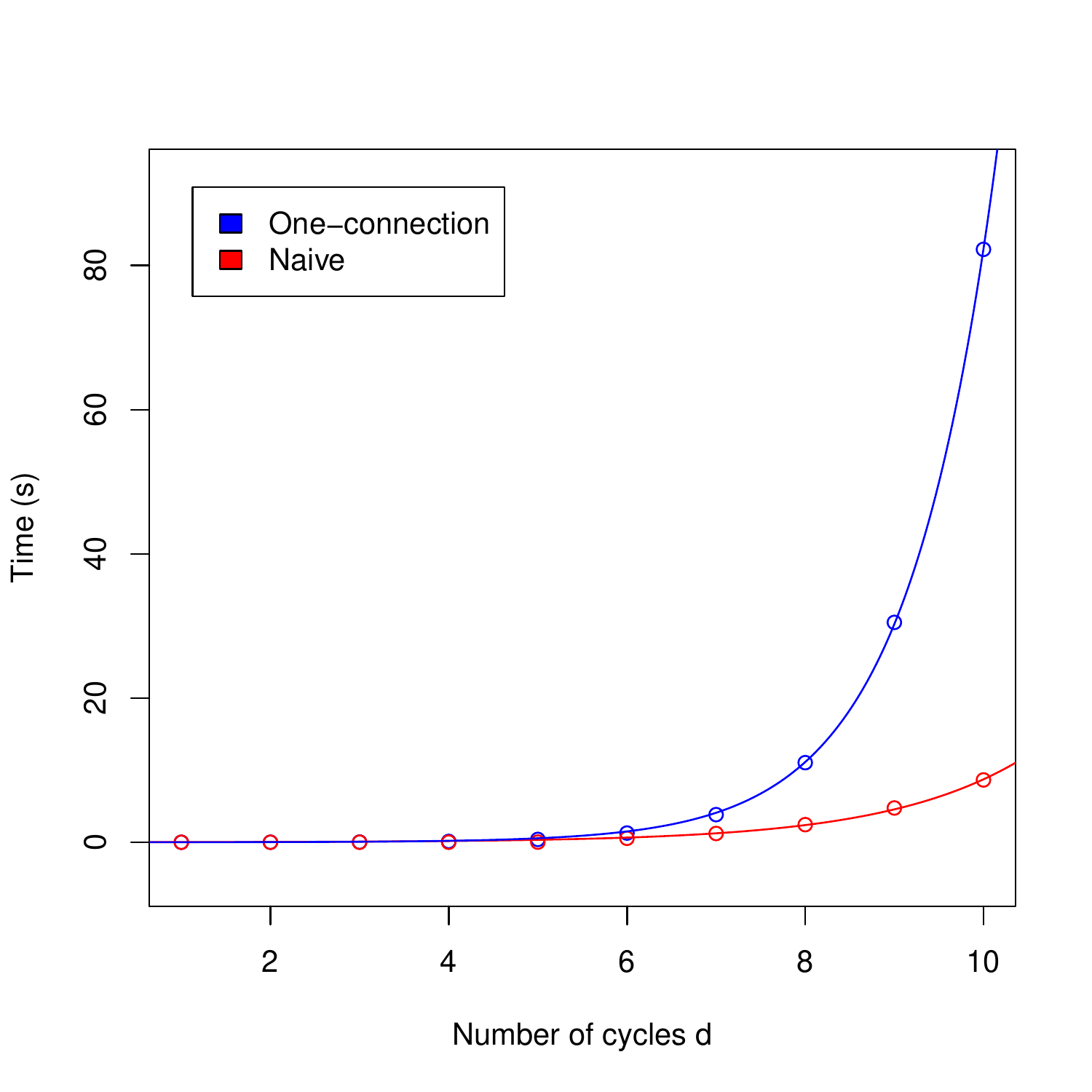}
    \caption{$2\ell=2$}
    \label{subfig:cycChain2}
  \end{subfigure}~
  \begin{subfigure}[t]{.5\textwidth}
    \centering
    \includegraphics[width=\textwidth]{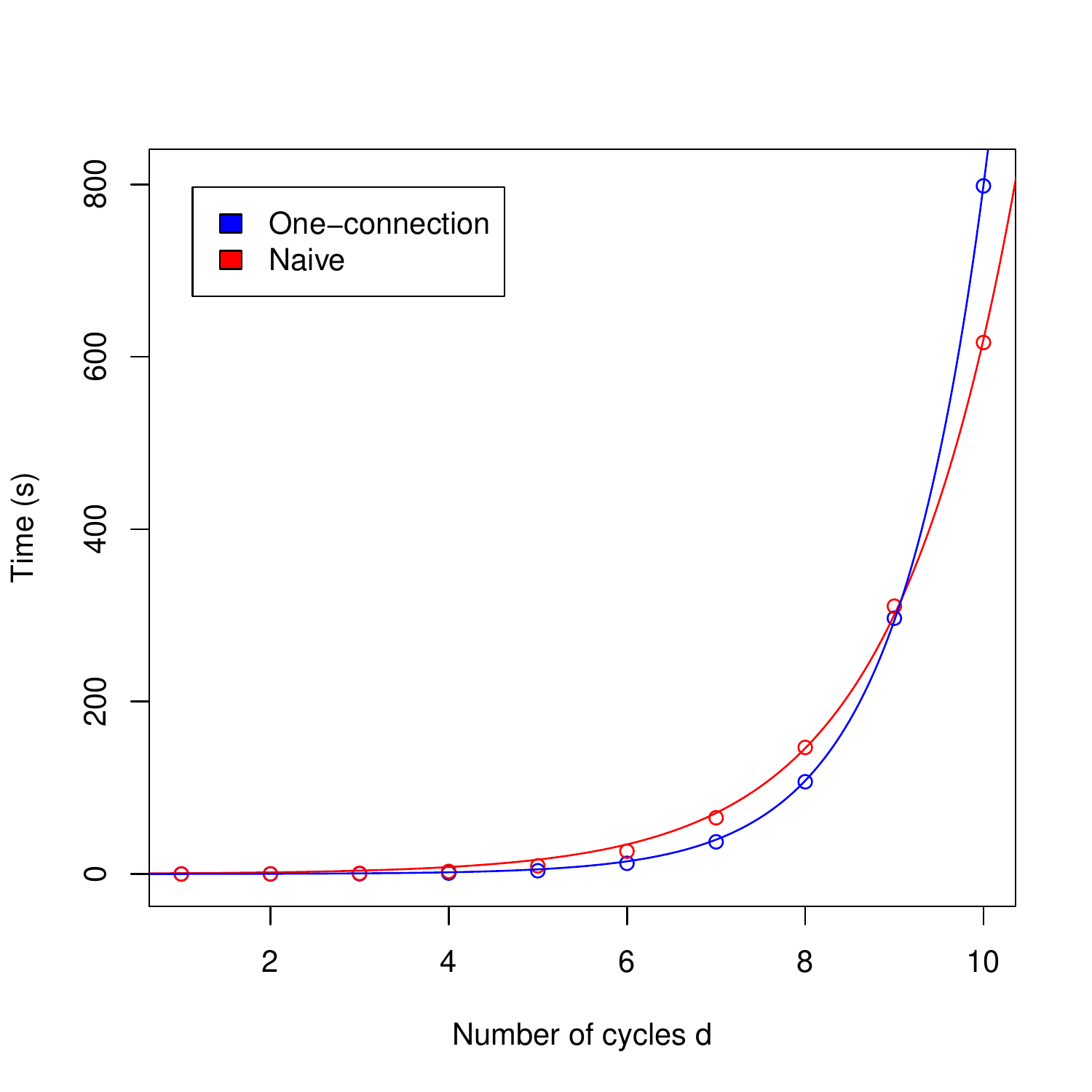}
    \caption{$2\ell=6$}
    \label{subfig:cycChain6}
  \end{subfigure}\\
  \begin{subfigure}[t]{.5\textwidth}
    \centering
    \includegraphics[width=\textwidth]{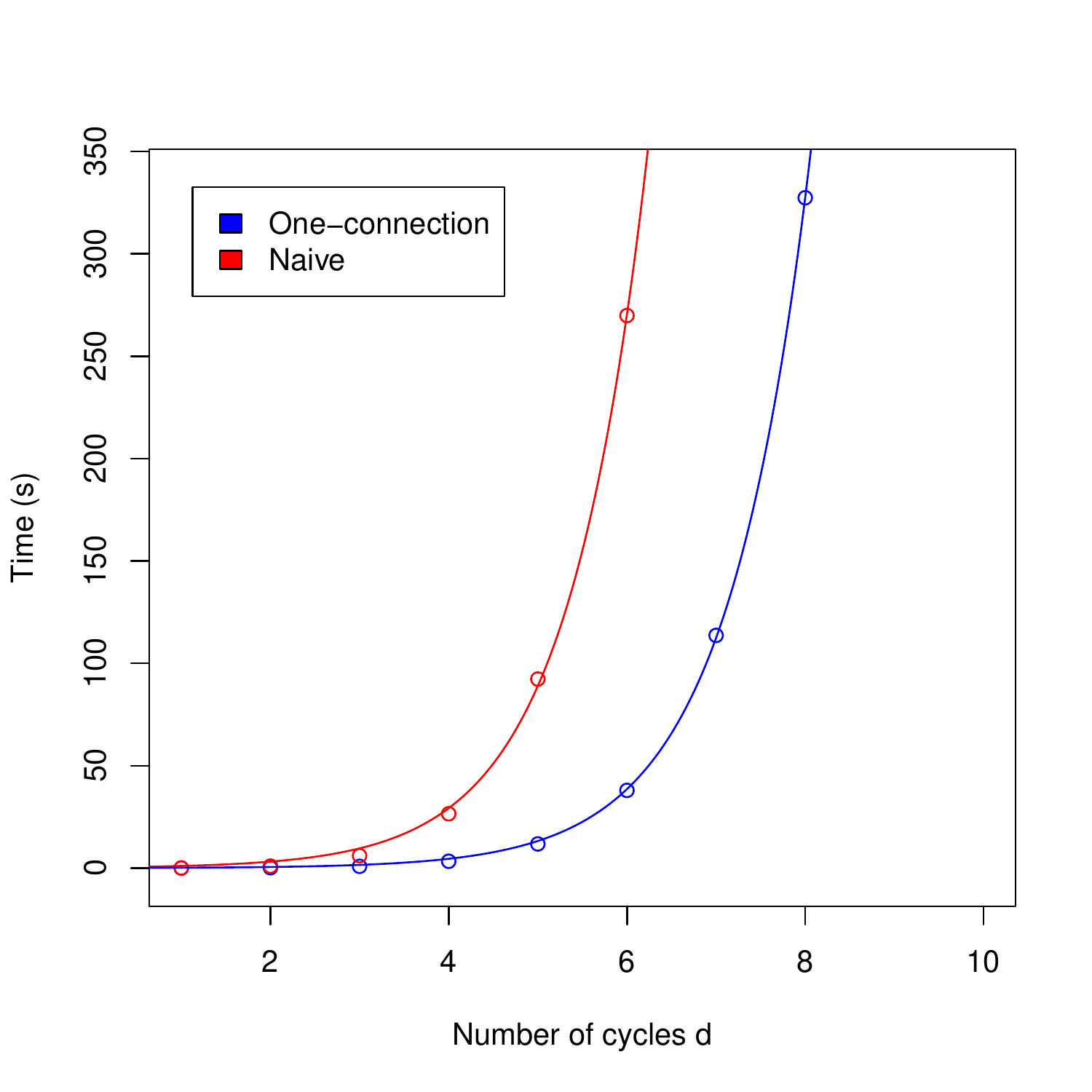}
    \caption{$2\ell=10$}
    \label{subfig:cycChain10}
  \end{subfigure}~
  \begin{subfigure}[t]{.5\textwidth}
    \centering
    \includegraphics[width=\textwidth]{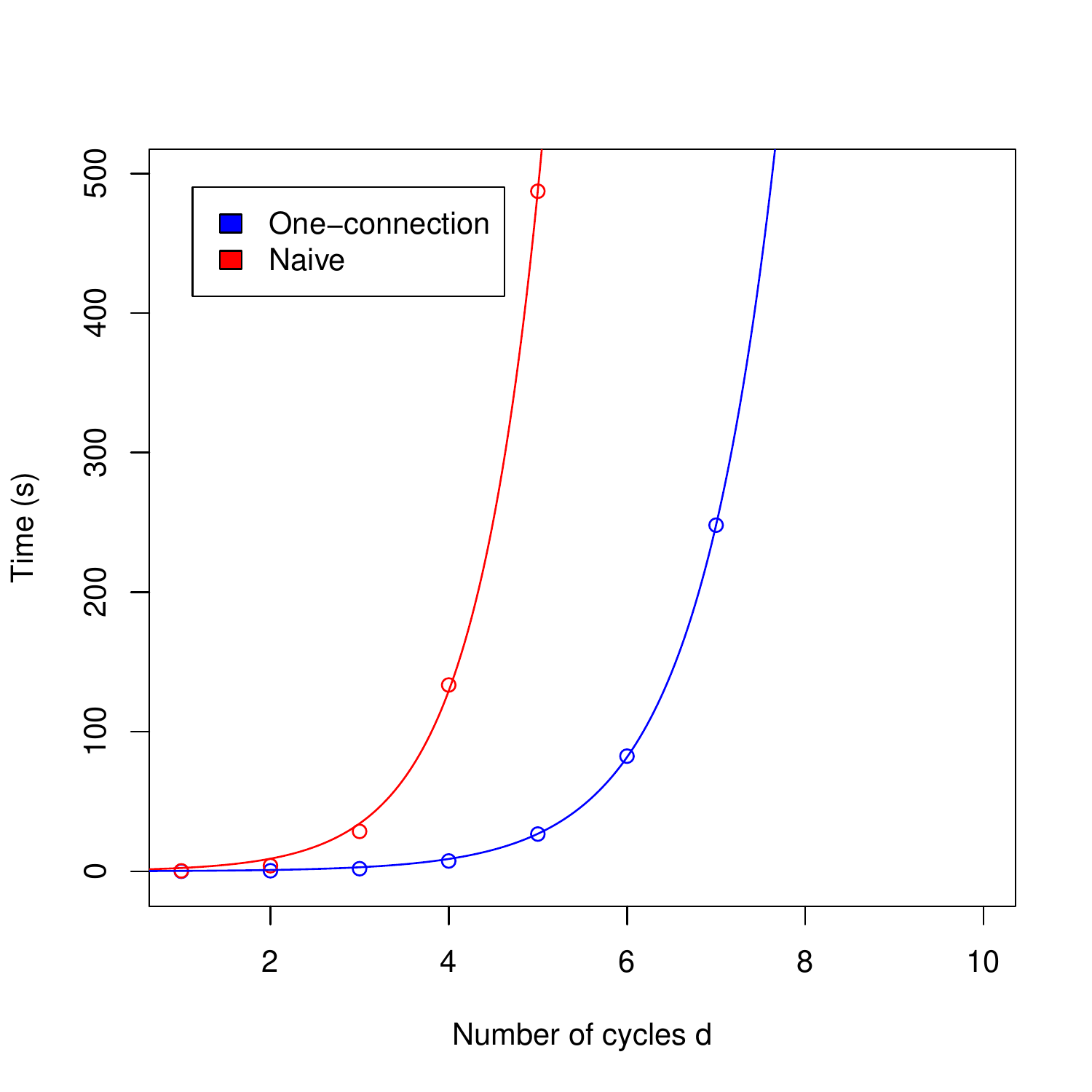}
    \caption{$2\ell=14$}
    \label{subfig:cycChain14}
  \end{subfigure}
  \caption{Timings for the computation of covariance matrices for chains of cycles.}
  \label{fig:cycleChainPlots}
\end{figure}

\subsubsection{Sparse random graphs} 
\label{ssub:sparse_random_graphs}
In the second computational experiment we consider random Erd\H os–-R\' enyi mixed graphs. We fix the vertices $\calV = [n]$, and for each ordered pair $(i,j)$, where $i,j \in [n]$, we add the directed edge $i \to j$ to $\calD$ with probability $p_D$. In addition, for each unordered pair $\{i,j\}$, where $i,j \in [n]$, we add a bidirected edge $i \leftrightarrow j$ to $\calB$ with probability $p_B$. We consider the following random graphs indexed by tuples $(n, p_D, p_B, c)$, where $c$ is the number of directed cycles in the mixed graph. We will choose the parameters
\begin{itemize}
  \item $n = 50, p_D = 0.020$
  \item $n = 100, p_D = 0.010$
  \item $n = 200, p_D = 0.005$,
\end{itemize}
and for each bullet, we let $p_B = 0, 0.01, 0.02, \dotsc, 0.1$, and $c = 0,1,\dotsc,10$. Thus we generate $121$ random graphs for each bullet for a total of $363$ graphs. We then compute the covariance matrix of each graph using the 1-connection method and the naive method, both with a time limit of 10 minutes.
We performed the experiments on a Mac Pro 
equipped with a 3,5 GHz 6-Core Intel Xeon processor  and 64GB RAM.

The 1-connection method was faster than the naive method in 85\%, 80\% and 79\% when $n=50, 100$, and $200$ respectively.

\begin{figure}
  \centering
  \begin{subfigure}[t]{.4\textwidth}
    \centering
    \includegraphics[width=\textwidth]{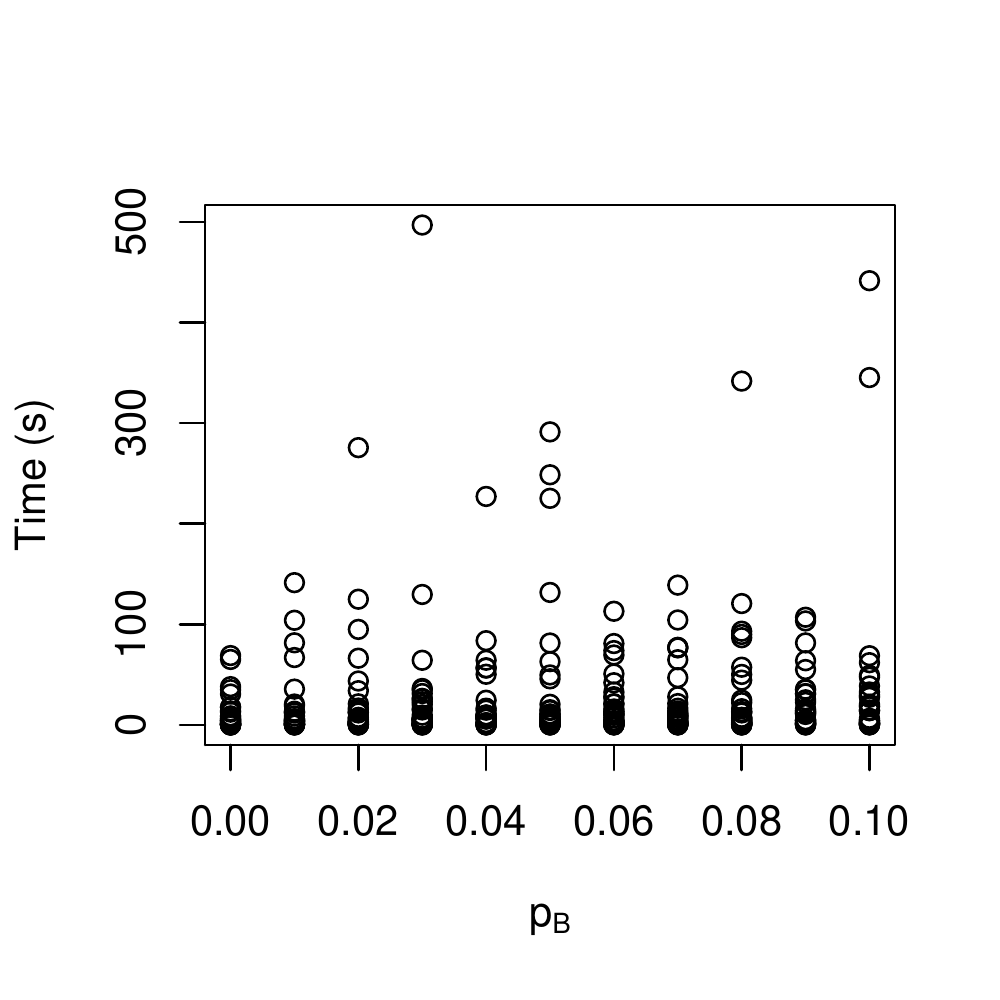}
    \caption{One-connection}
  \end{subfigure}~
  \begin{subfigure}[t]{.4\textwidth}
    \centering
    \includegraphics[width=\textwidth]{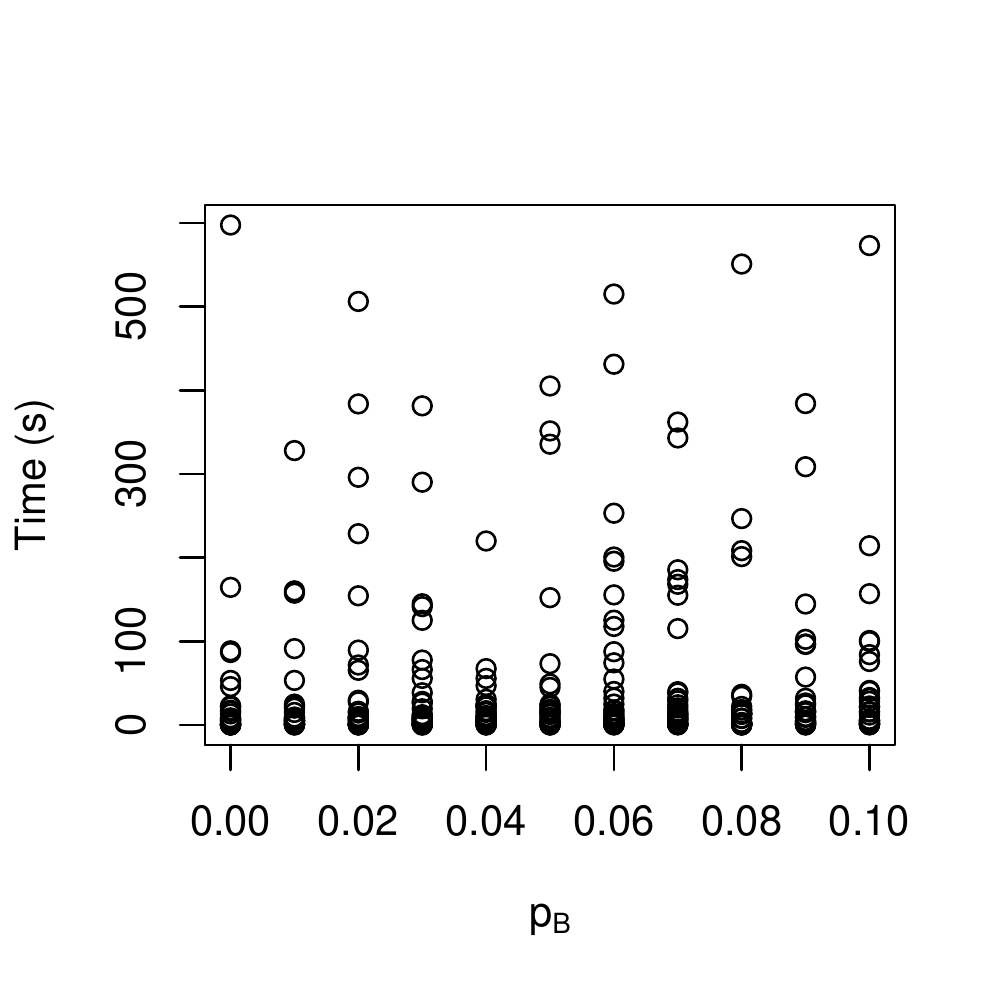}
    \caption{Naive method}
  \end{subfigure}
  \caption{Computation times for random sparse graphs with different bidirected edge probabilities $p_B$. Each circle corresponds to one graph.}
  \label{fig:pbvstime}
\end{figure}

\Cref{fig:pbvstime} shows the runtime for the random graphs with respect to the probability of adding a bidirected edge $p_B$. The graphs suggest that for both methods the number of bidirected edges doesn't matter much, indicating that the inversion of the matrix $(I-\Lambda)$ is the computational bottleneck.

For each pair $(n,c)$, where $n = 50,100,200$ is the number of vertices and $c = 0,1,\dotsc,10$ is the number of cycles, we have 11 graphs, one for each possible $p_B$. The mean runtimes of each are plotted in \Cref{fig:medianTimes}. As in the previous section, we observe that the 1-connection method will usually beat the naive method as the number of cycles increases.

\begin{figure}
  \centering
  \begin{subfigure}[t]{.5\textwidth}
    \centering
    \includegraphics[width=\textwidth]{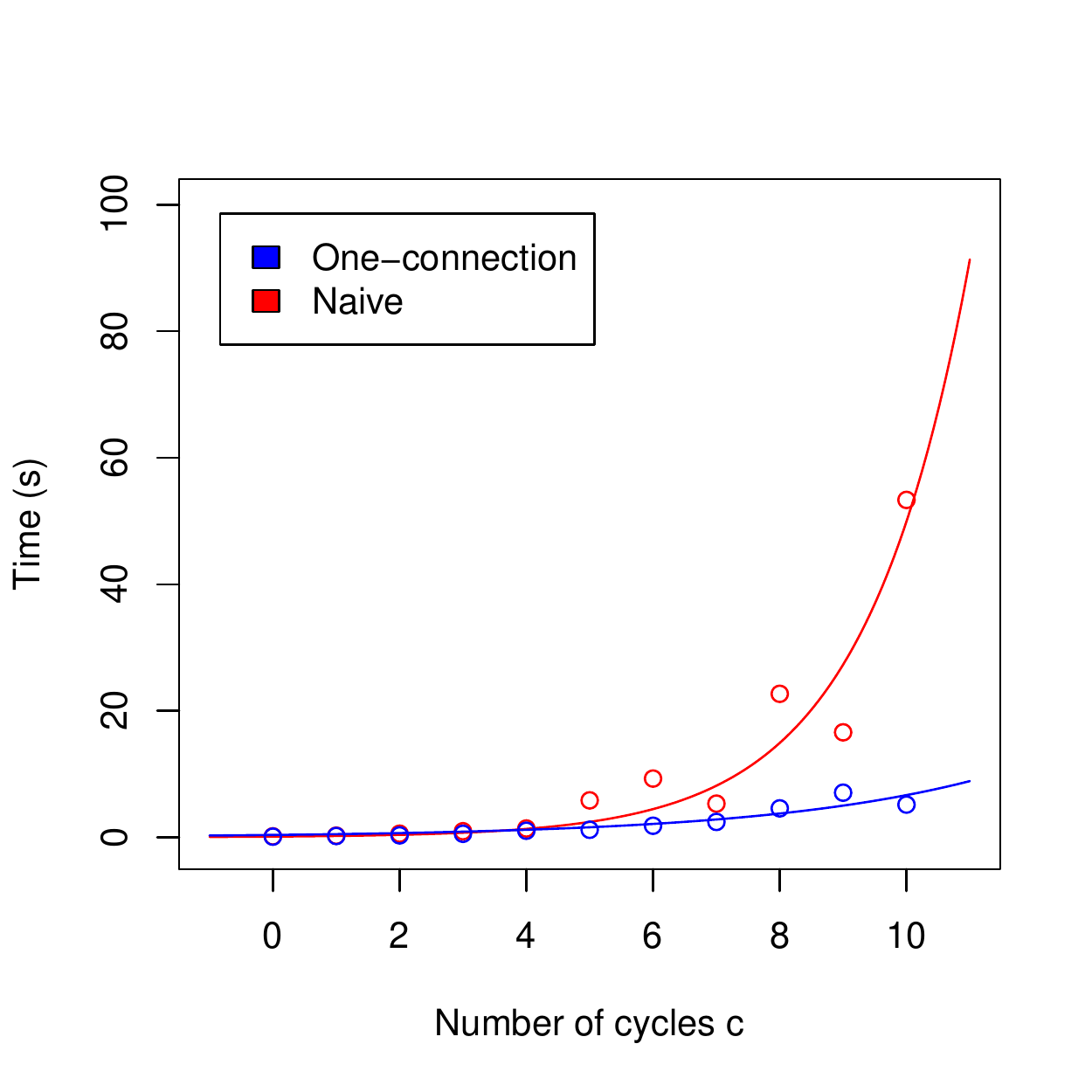}
    \caption{$n = 50$.}
  \end{subfigure}~
  \begin{subfigure}[t]{.5\textwidth}
    \centering
    \includegraphics[width=\textwidth]{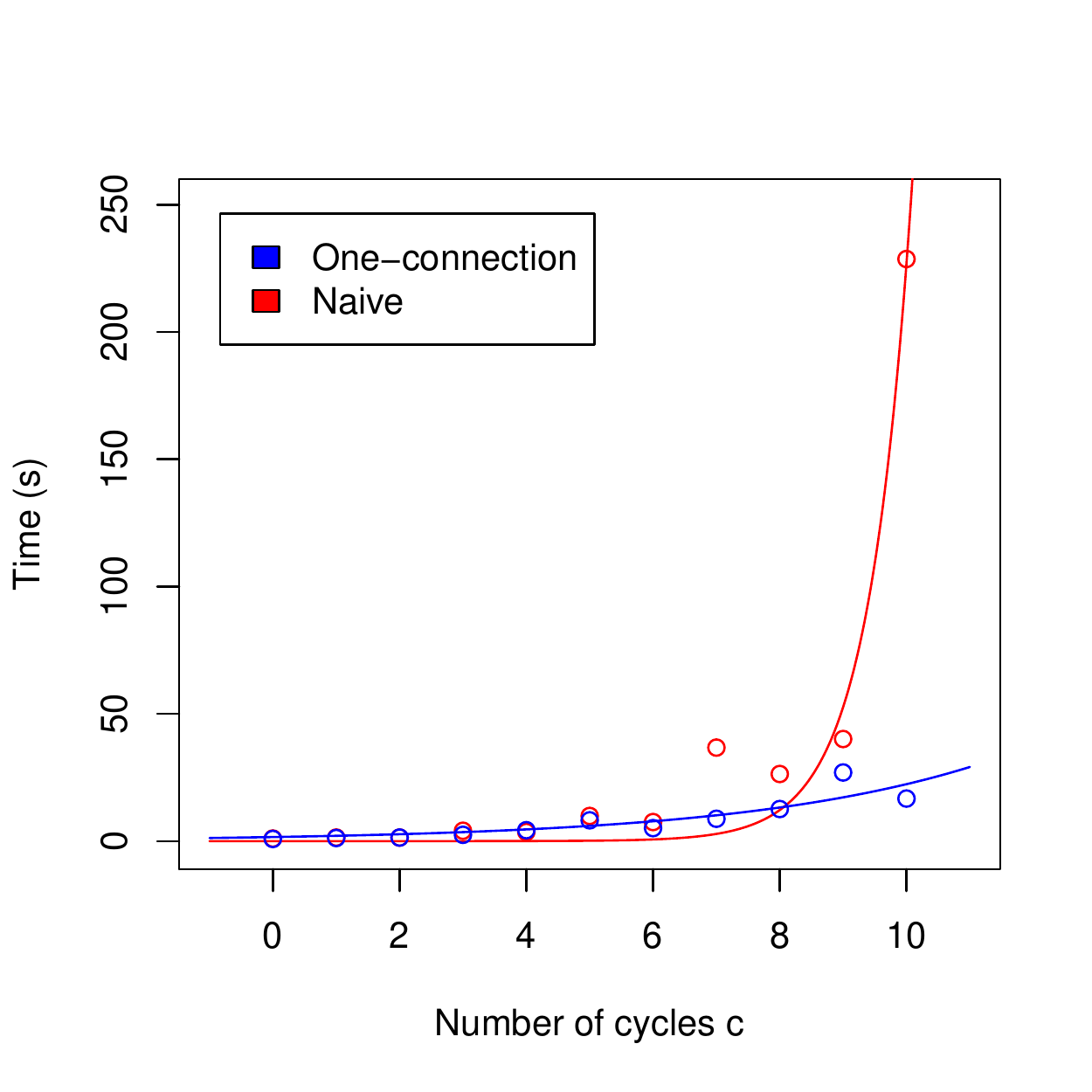}
    \caption{$n = 100$.}
  \end{subfigure}\\
  \begin{subfigure}[t]{.5\textwidth}
    \centering
    \includegraphics[width=\textwidth]{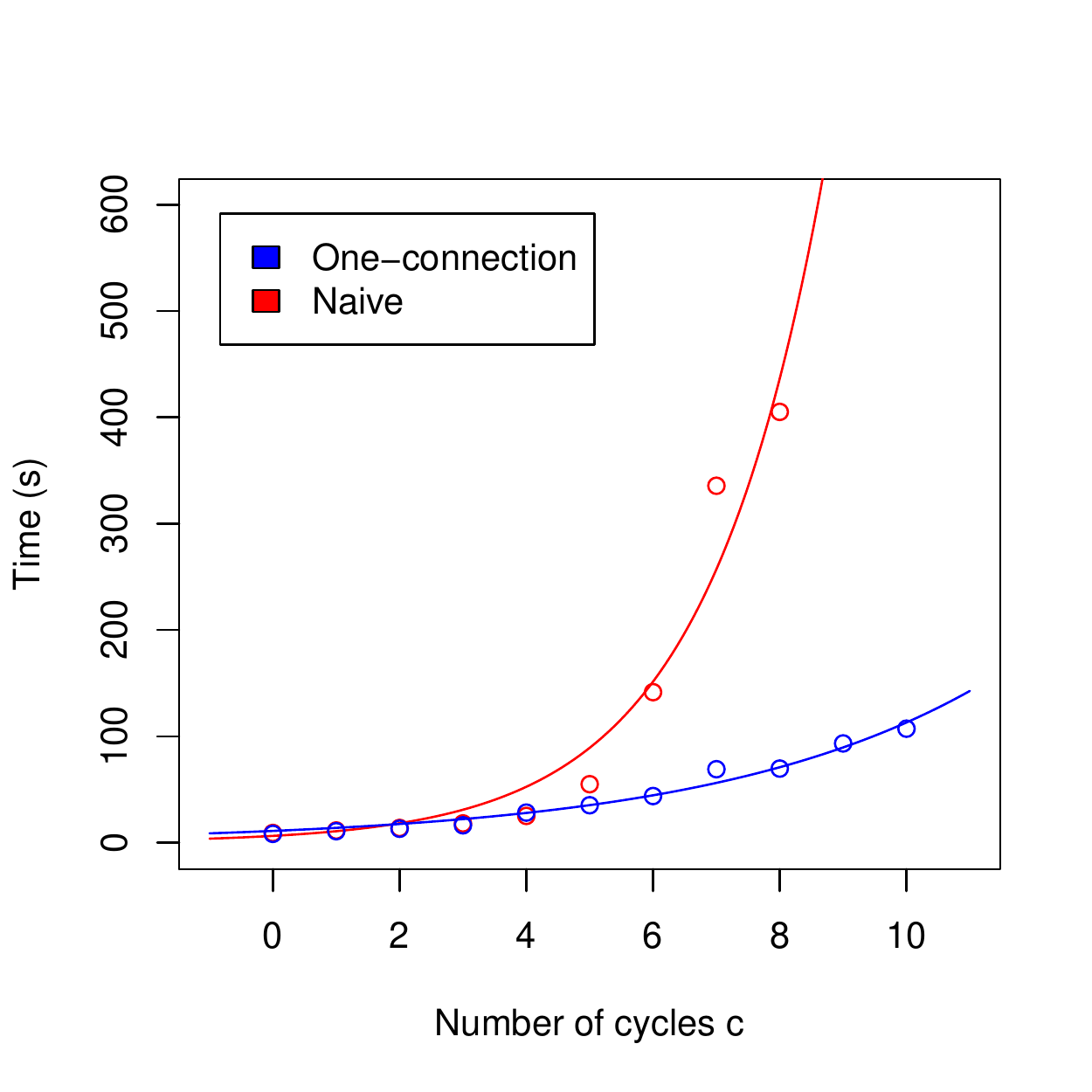}
    \caption{$n = 200$.}
  \end{subfigure}
  \caption{Median runtimes, with a fitted curve $t = a\cdot b^c$ for some $a,b$.}
  \label{fig:medianTimes}
\end{figure}

\section{Finding polynomials in the vanishing ideal}
\label{sec:test-in-ideal}

To determine whether two mixed graphs induce the same statistical model, we can investigate the polynomial relations between the entries of the covariance matrix. More precisely, let $G$ be a mixed graph, and recall that the statistical model $\calM_G$  consists of covariance matrices $\Sigma = \phi_G(\Lambda, \Omega)$. Instead of working with $\calM_G$, we will consider the \emph{(Gaussian) vanishing ideal} $\calI(G)$, defined by
\begin{equation}
  \calI(\calG) =
  \{f\in\R[\Sigma] \,|\, f(\Sigma)=0 \,\, \mbox{for all} \,\,
  \Sigma\in \calM_\calG\},
\end{equation}
where $\R[\Sigma]$ is the polynomial ring in $\sigma_{ij}$ with real coefficients. In other words, the vanishing ideal is the radical ideal corresponding to the Zariski closure of $\calM_G$.

If $G$ is a directed acyclic graph, the vanishing ideal can be computed by elimination
\begin{align*}
   \calI(G) = \langle \Sigma - \phi_G(\Lambda, \Omega) \rangle \cap \R[\Sigma].
\end{align*}
For general mixed graphs $G$, a common approach is to saturate with respect to $\det(I- \Lambda)$ and then perform elimination (see 
\cite{drton2018algebraic} for details).  Such a process uses Gr\"obner bases, and while relatively straightforward, the approach is
computationally prohibitive for graphs with even very few vertices. Thus a common aim is to develop combinatorial approaches that
exploit the structure of the underlying graph.
One example of the power of such an approach is described in \cite{geiger1990condindep}, where authors use the properties of the underlying graph to find conditional independence relations between the random variables, which in turn translate to the vanishing of almost principal minors of the covariance matrix.
Recently in \cite{drton2018nested},  authors introduced the notion
of nested determinants to obtain elements of the vanishing ideal of the model
that need not be given by determinantal constraints of submatrices of
$\Sigma.$ For example, in the case of the Verma graph in Figure \ref{fig:verma}, the authors found a
polynomial in the vanishing ideal that was not given by any subdeterminant of the covariance matrix, but could be represented as a nested determinant.

Our 1-connection framework gives us a concrete interpretation of the entries of the covariance matrix. Once symbolic expressions (in $\lambda$ and $\omega$-variables) for each entry of the covariance matrix are computed, testing whether or not some polynomial $f \in \R[\Sigma]$ is in $\calI(G)$ can be done by a simple substitution. This idea can also be exploited to find elements of $\calI(G)$ of small degree by creating a generic polynomial $f$ with indeterminate coefficients, substituting, and using linear algebra to solve for the coefficients. As the vanishing ideal is homogeneous, it suffices to consider homogeneous elements $f$. Naturally, this method will suffer from the combinatorial explosion of the number of terms in $f$, but this can be kept in control with the right heuristics to reduce the number of terms such as those illustrated in Example \ref{ex:findingHomogeneousPolynomials}.

\begin{figure}
\centering
\begin{tikzpicture}
\tikzset{vertex/.style = {shape=circle,draw,minimum size=0.5em}}
\tikzset{edge/.style = {->,> = latex'}}
\tikzset{bdedge/.style = {<->,> = latex'}}
\node[vertex] (a) at (0,0) {$1$};
\node[vertex] (b) at (2,0) {$2$};
\node[vertex] (c) at (4,0) {$3$};
\node[vertex] (d) at (6,0) {$4$};

\draw[edge] (a) to (b);
\draw[edge] (b) to (c);
\draw[edge] (c) to (d);
\draw[edge] (a) to[bend left] (c);
\draw[bdedge] (b) to[bend right] (d);

\end{tikzpicture}
\caption{The Verma graph}
\label{fig:verma}
\end{figure}
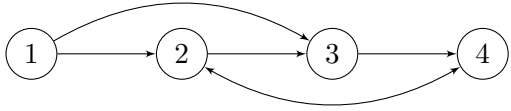

\begin{example}\label{ex:findingHomogeneousPolynomials}
  Let ${G}$ be the Verma graph, depicted in Figure \ref{fig:verma}. We will show that there are no degree 1 homogeneous polynomials in the vanishing ideal. Let $\Sigma = \phi_{G}(\Lambda,\Omega)$. Notice each  $\sigma_{ij}$, for $1\leq i\leq j \leq 4$, is linear in the $\omega$ variables. 
  Using the 1-connection method for example, we know exactly what each $\sigma_{ij}$ looks like. For example, we have
  \begin{align*}
    \sigma_{23} = (\lambda_{12}^2 \lambda_{23} + \lambda_{12}\lambda_{13}) \omega_{11} + \lambda_{23}\omega_{22} 
  \end{align*}

  We immediately observe that $\sigma_{11}$ is the only  $\sigma_{ij}$ that has a term with the monomial $\omega_{11}$. Thus if there were a homogeneous polynomial $f = \sum_{i,j} c_{ij} \sigma_{ij} \in \mathbb{R}[\Sigma]$ of degree 1  in the Gaussian vanishing ideal, we would have to have $c_{11} = 0$, otherwise there would be no way to cancel the $\omega_{11}$-term. 

  Amongst the $\sigma_{ij}$ still remaining, $\sigma_{12}$ is the only one containing a monomial $\lambda_{12}\omega_{11}$, so by the same reasoning as above, the monomial $\sigma_{12}$ can be disregarded, as it will not appear in any homogeneous degree 1 polynomial in the Gaussian vanishing ideal.

  We can repeat the procedure above, each time identifying a monomial unique to some $\sigma_{ij}$. In this particular example, in the end all $\sigma_{ij}$ will get eliminated, so we conclude there are no homogeneous degree 1 polynomials in the Gaussian vanishing ideal.

  For homogeneous degree 2 polynomials, we can use a similar procedure. Any degree 2 $\sigma$-monomial will be linear in the degree 2 $\omega$-monomials with coefficients being $\lambda$-polynomials, i.e.
  \begin{align*}
    \sigma^\alpha = \sum_{|\beta| = 2} k_\beta \omega^\beta,
  \end{align*}
  where $k_\beta \in \mathbb{R}[\Lambda]$ where $\R[\Lambda]$ is the ring of polynomials in $\lambda_{ij}$ with
real coefficients. 

  We observe that for example $\sigma_{1,4}\sigma_{4,4}$ is the only one among degree 2 monomials in $\sigma$ that has a the monomial $\lambda_{1,3}^3 \lambda_{3,4}^3 \omega_{1,1}^2$ in its support. Hence $\sigma_{1,4}\sigma_{4,4}$ cannot appear in any degree 2 homogeneous polynomial in the Gaussian vanishing ideal. We can repeat this procedure, each time eliminating a $\sigma$-monomial containing a unique monomial among the ones remaining. In this particular example, all degree 2 monomials get eliminated in the end, which shows that there are indeed no homogeneous degree 2 elements in the Gaussian vanishing ideal.
\end{example}

The procedure above gives rise to an algorithm that gives candidates for the monomial support of homogeneous polynomials in the Gaussian vanishing ideal. More precisely, for degree $d$ we consider the polynomials of the form
\begin{align*}
  f = \sum_{|\alpha| = d} c_\alpha \sigma^\alpha.
\end{align*}
The algorithm will return a subset $L$ of $\sigma$-monomials of degree $d$ such that any $f$ that lies in the Gaussian vanishing ideal will have $c_\alpha = 0$ for all $\sigma^\alpha \not\in L$. Thus the set $L$ contains the monomial support of degree $d$ homogeneous polynomials in the Gaussian vanishing ideal. In particular if $L$ is empty, there are no degree $d$ homogeneous polynomials in the Gaussian vanishing ideal. The full algorithm is shown in \Cref{alg:pruneSupport}.

\Cref{alg:pruneSupport} constructs a matrix where each entry is a set of $\lambda$-monomials. If the degree is large, constructing the matrix might take too long. \Cref{alg:weakPruneSupport} is a weakening of \Cref{alg:pruneSupport} which instead of considering a set of $\lambda$-monomials, we keep only the leading monomial with respect to some monomial order. This heuristic will in general be faster than \Cref{alg:pruneSupport}, but it may return a larger set of monomials. Nevertheless, \Cref{alg:weakPruneSupport} can serve as a quick first pass before running the more thorough \Cref{alg:pruneSupport}. Finally, one can obtain elements of the Gaussian vanishing ideal by computing syzygies over the base field of the monomials output by Algorithm \ref{alg:pruneSupport}.

\begin{algorithm}
    \caption{Compute monomial support of homogeneous elements in the Gaussian vanishing ideal}
    \label{alg:pruneSupport}
    \begin{algorithmic}[1]
        \Require A mixed graph $\calG = (\calV,\calD,\calB)$, a degree $d$.
        \Ensure A subset of $L$ of $\sigma$-monomials of degree $d$
        \Statex
        \Procedure{MonSupport}{$\calG$}
            \State $c \gets $ list of degree $d$ monomials in $\sigma$
            \State $r \gets $ list of degree $d$ monomials in $\omega$
            \State $M \gets $ adjugate matrix of $I-\Lambda$, see \eqref{eq:oneconnmatrix}
            \State $\varphi \gets $ the ring map $\mathbb{Q}[\Sigma] \to \mathbb{Q}[\Lambda,\Omega]$ taking $\sigma_{i,j}$ to $M_{i,j}$.
            \State $T \gets $ an $|r|\times |c|$ matrix with entries as follows: the entry corresponding to row $\omega^\alpha$ and column $\sigma^\beta$ is the set of $\lambda$-monomials in the coefficient of $\omega^\alpha$ in $\varphi(\sigma^\beta)$.
            \Repeat
              \State $\lambda^{\gamma'} \gets $ a $\lambda$-monomial occurring exactly once in its row
              \State $\sigma^{\beta'} \gets $ column where $\lambda^{\gamma'}$ appears.
              \State $T \gets $ $T$ with column $\sigma^{\beta'}$ removed
            \Until{No column is removed from $T$}
            \State \Return The set $L$ of monomials $\sigma^{\beta}$ corresponding to the remaining columns of $T$.
        \EndProcedure
    \end{algorithmic}
\end{algorithm}

\begin{algorithm}
    \caption{Compute a superset of the monomial support of homogeneous elements in the Gaussian vanishing ideal}
    \label{alg:weakPruneSupport}
    \begin{algorithmic}[1]
        \Require A mixed graph $\calG = (\calV,\calD,\calB)$, a degree $d$.
        \Ensure A subset of $L$ of $\sigma$-monomials of degree $d$
        \Statex
        \Procedure{WeakMonSupport}{$\calG$}
            \State $c \gets $ list of degree $d$ monomials in $\sigma$
            \State $r \gets $ list of degree $d$ monomials in $\omega$
            \State $M \gets $ adjugate matrix of $I-\Lambda$, see \eqref{eq:oneconnmatrix}
            \State $\varphi \gets $ the ring map $\mathbb{Q}[\Sigma] \to \mathbb{Q}[\Lambda,\Omega]$ taking $\sigma_{i,j}$ to $M_{i,j}$.
            \State $T \gets $ an $|r|\times |c|$ matrix with entries as follows: the entry corresponding to row $\omega^\alpha$ and column $\sigma^\beta$ is the leading $\lambda$-monomial in the coefficient of $\omega^\alpha$ in $\varphi(\sigma^\beta)$.
            \Repeat
              \State $\lambda^{\gamma'} \gets $ a $\lambda$-monomial occurring exactly once in its row
              \State $\sigma^{\beta'} \gets $ column where $\lambda^{\gamma'}$ appears.
              \State $T \gets $ $T$ with column $\sigma^{\beta'}$ removed
            \Until{No column is removed from $T$}
            \State \Return The set $L$ of monomials $\sigma^{\beta}$ corresponding to the remaining columns of $T$.
        \EndProcedure
    \end{algorithmic}
\end{algorithm}

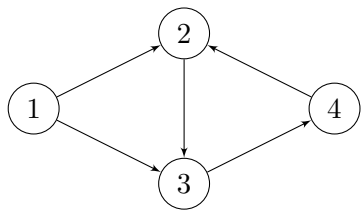
\begin{figure}
\centering
\begin{tikzpicture}
\tikzset{vertex/.style = {shape=circle,draw,minimum size=0.5em}}
\tikzset{edge/.style = {->,> = latex'}}
\node[vertex] (a) at (-2,0) {$1$};
\node[vertex] (b) at (0,1) {$2$};
\node[vertex] (c) at (0,-1) {$3$};
\node[vertex] (d) at (2,0) {$4$};

\draw[edge] (a) to (b);
\draw[edge] (a) to (c);
\draw[edge] (b) to (c);
\draw[edge] (c) to (d);
\draw[edge] (d) to (b);

\end{tikzpicture}
\caption{Cyclic graph in Example \ref{ex:drtonCyclic}.}
\label{fig:drtonCyclic}
\end{figure}

\begin{example}\label{ex:drtonCyclic}
  Consider the graph Figure \ref{fig:drtonCyclic}. We can use the 1-connection method to compute the adjugate matrix of $I-\Lambda$. Then, using the procedure above, we see that there are no homogeneous polynomials of degree $1,2,3,4$ and $5$ in the Gaussian vanishing ideal. In degree 6, we start with the set of all $\sigma$-monomials of degree 6, of which there are 5005. We run the weaker \Cref{alg:weakPruneSupport} to reduce this number to 3629. Then we can run \Cref{alg:pruneSupport} to further reduce the number to 31. A quick syzygy computation show exactly one relation among those 31 monomials:
  \begin{gather*}
    {\sigma}_{0 2}{\sigma}_{03}^{3}{\sigma}_{12}^{2}-2{\sigma}_{02}^{2}{\sigma}_{03}^{2}{\sigma}_{12}{\sigma}_{13}+{\sigma}_{02}^{3}{\sigma}_{03}{\sigma}_{13}^{2}-{\sigma}_{01}{\sigma}_{03}^{3}{\sigma}_{12}{\sigma}_{22}+{\sigma}_{01}{\sigma}_{02}{\sigma}_{03}^{2}{\sigma}_{13}{\sigma}_{22}\\
    +{\sigma}_{00}{\sigma}_{03}^{2}{\sigma}_{12}{\sigma}_{13}{\sigma}_{22}-{\sigma}_{00}{\sigma}_{02}{\sigma}_{03}{\sigma}_{13}^{2}{\sigma}_{22}+{\sigma}_{01}^{2}{\sigma}_{03}^{2}{\sigma}_{22}{\sigma}_{23}-{\sigma}_{00}{\sigma}_{03}^{2}{\sigma}_{11}{\sigma}_{22}{\sigma}_{23}-{\sigma}_{01}^{2}{\sigma}_{02}{\sigma}_{03}{\sigma}_{23}^{2}\\
    +{\sigma}_{00}{\sigma}_{02}{\sigma}_{03}{\sigma}_{11}{\sigma}_{23}^{2}+{\sigma}_{01}{\sigma}_{02}^{2}{\sigma}_{03}{\sigma}_{12}{\sigma}_{33}-{\sigma}_{00}{\sigma}_{02}{\sigma}_{03}{\sigma}_{12}^{2}{\sigma}_{33}-{\sigma}_{01}{\sigma}_{02}^{3}{\sigma}_{13}{\sigma}_{33}\\
    +{\sigma}_{00}{\sigma}_{02}^{2}{\sigma}_{12}{\sigma}_{13}{\sigma}_{33}-{\sigma}_{01}^{2}{\sigma}_{02}{\sigma}_{03}{\sigma}_{22}{\sigma}_{33}+{\sigma}_{00}{\sigma}_{02}{\sigma}_{03}{\sigma}_{11}{\sigma}_{22}{\sigma}_{33}+{\sigma}_{01}^{2}{\sigma}_{02}^{2}{\sigma}_{23}{\sigma}_{33}-{\sigma}_{00}{\sigma}_{02}^{2}{\sigma}_{11}{\sigma}_{23}{\sigma}_{33}.
  \end{gather*}
  Our result agrees with \cite[Example 3.6]{drton2009likelihood} and took about 7 minutes to compute. In our experiments with \textsc{Macaulay2} \cite{M2} and \textsc{Singular} \cite{singular}, computing elements in the Gaussian vanishing ideal via elimination as in \cite{drton2009likelihood} did not terminate after 12 hours. 
\end{example}

\section{Acknowledgements}
\label{sec:ack}

This material is based upon work supported by the National Science Foundation under Grant No. DMS-1439786 while the authors were in residence at the Institute for Computational and Experimental Research in Mathematics (ICERM) in Providence, RI, during the
semester on Nonlinear Algebra in Fall 2018.  This work is the product of a working group on graphical models that was held at ICERM during that semester.  Other members of the working group that contributed to initial explorations and discussions include: Alexandros Grosdos, Cvetelina Hill, Sara Lamboglia, Samantha Sherman, and Dane Wilburne.  We also would like to thank Elina Robeva for introducing us to structural equation models and sharing her expertise while at ICERM. 

BA gratefully acknowledges support through the Simons Institute for the Theory of Computing, University of California Berkeley, USA. EG is supported by National Science Foundation DMS-1945584. 
MH is partially supported by the Vilho, Yrj\"o and Kalle V\"ais\"al\"a Foundation and the Chateaubriand Fellowship. ET is partially supported by the ANR JCJC GALOP (ANR-17-CE40-0009), the
PGMO grant ALMA, and the PHC GRAPE.

\bibliographystyle{abbrv}
\bibliography{GGM}

\end{document}